\title[Finer estimates on the $2$-d matching problem]
{Finer estimates on the $2$-dimensional matching problem}
\author{L. Ambrosio}
\address{Scuola Normale Superiore, Piazza dei Cavalieri 7, 56126 Pisa, Italy}
\email{luigi.ambrosio@sns.it}
\author{F. Glaudo}
\address{ETH, Rämistrasse 101, 8092 Zürich, Switzerland}
\email{federico.glaudo@math.ethz.ch}
\begin{document}
\begin{abstract}
  We study the asymptotic behaviour of the expected cost of the random matching problem
  on a $2$-dimensional compact manifold, improving in several aspects the results of \cite{Ambrosio-Stra-Trevisan2018}.
  In particular, we simplify the original proof (by treating at the same time upper and lower bounds) and
  we obtain the coefficient of the leading term of the asymptotic expansion of the expected cost for the
  random bipartite matching on a general $2$-dimensional closed manifold. We also sharpen the estimate of 
  the error term given in \cite{Ledoux18} for the semi-discrete matching.
  
  As a technical tool, we develop a refined contractivity estimate for the heat flow 
  on random data that might be of independent interest.
\end{abstract}

\maketitle
\tableofcontents
\thispagestyle{empty} %Removes the ugly page number on the first page.

\section{Introduction}
The bipartite matching problem is a very classical problem in computer science. It asks to find, among all
possible matching in a bipartite weighted graph, the one that minimizes the sum of the costs of the chosen
edges. The most typical instance of the matching problem arises when trying to match two families 
$(X_i)_{1\le i\le n}$ and $(Y_i)_{1\le i\le n}$  of $n$ points in a metric space $(M, d)$ and the cost of
matching two points $X_i$ and $Y_j$ depends only on the distance $d(X_i, Y_j)$.

We will investigate a random version of the problem, that is, in its most general form, the following one.
\begin{problem*}[Random Bipartite Matching]
  Let $(M, d, \m)$ be a metric measure space such that $\m$ is a probability measure and let $p\ge 1$ be
  a fixed exponent. 
  Given two families $(X_i)_{1\le i\le n}$ and $(Y_i)_{1\le i\le n}$ of independent random points
  $\m$-uniformly distributed on $M$, study the value of the expected matching cost
  \begin{equation*}
    \E{\min_{\sigma\in S^n} \frac1n\sum_{i=1}^n d^p\left(X_i, Y_{\sigma(i)}\right)} \fullstop
  \end{equation*}
  For brevity we will denote with $\mathbb E_n$ the mentioned expected value, dropping the dependence on
  $(M, d, \m)$ and $p>0$.
\end{problem*}
The coefficient $\frac1n$ before the summation is inserted both for historical reasons and because it will
make things easier when we will move to the context of probability measures.

Let us remark that, without any further assumption on $M$ and $\m$, the statement of the problem is too general to be interesting.

In the special case $M=\cc01^d$, $\m=\restricts{\Leb^d}{M}$ the problem has been studied deeply in 
the literature. 
As a general reference, we suggest the reading of the book \cite{Talagrand14}, which devotes multiple chapters to 
the treatment of the random matching problem.
Before summarizing the main known results in this setting, let us remark that also the weighted setting ($M=\R^d$, $\m$ a generic measure with adequate moment estimates) has attracted a lot of attention since it is the most useful in applications (see \cite{DereichScheutzowSchottstedt2013,BoissarLeGouic2014,FournierGuillin2015,WeedBach2017}).

\subsubsection*{Dimension $=1$} 
  When $d=1$, so $M=\cc01$, the problem is much easier compared to
  other dimensions. Indeed on the interval a monotone matching is always optimal.
  Thus the study of $\mathbb E_n$ reduces to the study of the probability distribution of 
  the $k$-th point in the increasing order (that is $X_k$ if the sequence $(X_i)$ is assumed to be 
  increasing). In particular it is not hard to show
  \begin{equation*}
    \mathbb E_n \approx n^{-\frac p2} \fullstop
  \end{equation*}
  In the special case $d=1$ and $p=2$ we can even compute $\mathbb E_n$ explicitly
  \begin{equation*}
      \mathbb E_n = \frac 1{3(n+1)} \fullstop
  \end{equation*}
  A monograph on the $1$-dimensional case where the mentioned results, and much more, can be found 
  is \cite{Bobkov2014}.
\subsubsection*{Dimension $\ge 3$} 
  When $d\ge 3$, for any $1\le p < \infty$ it holds
  \begin{equation*}
    \mathbb E_n \approx n^{-\frac pd} \fullstop
  \end{equation*}
  For $p=1$, the result is proven in \cite{Dobric1995,Talagrand1992}, whereas the
  paper \cite{Ledoux2017} addresses all cases $1\le p < \infty$ with methods, 
  inspired by \cite{Ambrosio-Stra-Trevisan2018}, similar to the ones we are going to use.
  
  In \cite[Theorem 2]{Barthe2013} the authors manage to prove the existence of the limit of the renormalized cost
  \begin{equation*}
    \lim_{n\to\infty}\mathbb E_n \cdot n^{\frac pd}
  \end{equation*}
  under the constraint $1\le p < d/2$, but the value of the limit is not determined.
\subsubsection*{Dimension $=2$} 
  When $d=2$, the study of $\mathbb E_n$ becomes suddenly more delicate. 
  As shown in the fundamental paper \cite{Ajtai1984}, for any $1\le p <\infty$, the growth is
  \begin{equation*}
    \mathbb E_n \approx \left(\frac{\log(n)}{n}\right)^{p/2} \fullstop
  \end{equation*}
  Their proof is essentially combinatorial and following such a strategy there is little hope to 
  be able to compute the limit of the renormalized quantity
  \begin{equation*}
    \lim_{n\to\infty} \mathbb E_n\cdot \left(\frac{n}{\log(n)}\right)^{p/2} \fullstop
  \end{equation*}
  Much more recently, in 2014, in \cite{CaraccioloEtAl2014} the authors claimed that, if $p=2$, the 
  limit value is $\frac1{2\pi}$ with an ansatz supporting their claim (see also \cite{Caracciolo2014one} for 
  a deeper analysis of the 1-dimensional case).
  Then in \cite{Ambrosio-Stra-Trevisan2018} it was finally proven that the claim is indeed true.
  The techniques used in this latter work are completely different from the combinatorial approaches
  seen in previous works on the matching problem, indeed the tools used come mainly from the theory
  of partial differential equations and optimal transport.
  
\subsubsection*{Semi-discrete matching problem, large scale behaviour} 
In the semi-discrete matching problem, a single family of independent and identically distributed points
$(X_i)_{1\le i\le n}$ has to be matched to the reference measure $\m$. 
By rescaling, and possibly replacing the empirical measures with
a Poisson point process, the semi-discrete matching problem can be connected to
the Lebesgue-to-Poisson transport problem of \cite{HuesmannSturm2013}, 
see \cite{Goldman2018} where the large scale behaviour of the optimal maps
is deeply analyzed.    

As in \cite{Ambrosio-Stra-Trevisan2018}, we will focus on the case where $(M, d)$ is a $2$-dimensional 
compact Riemannian manifold, $\m$ is the volume measure and the cost is given by the square of the distance.
From now on we are going to switch from the language of combinatorics and computer science to the 
language of probability and optimal transport. Thus, instead of matching two family of points we will 
minimize the Wasserstein distance between the corresponding empirical measures.

We will prove the following generalization of \cite[Eq. (1.2)]{Ambrosio-Stra-Trevisan2018} 
to manifold different from the torus and the square.
\begin{theorem}[Main Theorem for bipartite matching]\label{thm:main_theorem_bipartite}
  Let $(M,\metric)$ be a $2$-dimensional compact closed manifold (or the square $\cc01^2$) whose 
  volume measure $\m$ is a probability.
  Let $(X_i)_{i\in\N}$ and $(Y_i)_{i\in\N}$ be two families of independent random points 
  $\m$-uniformly distributed on $M$. Then
  \begin{equation*}
    \lim_{n\to\infty} \frac{n}{\log(n)}\cdot
    \E{W^2_2\left(\frac1n\sum_{i=1}^n\delta_{X_i}, \frac1n\sum_{i=1}^n\delta_{Y _i}\right)}
    =\frac1{2\pi} \fullstop
  \end{equation*}
\end{theorem}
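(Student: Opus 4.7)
The plan is to adapt the PDE/optimal-transport strategy of \cite{Ambrosio-Stra-Trevisan2018} and, as suggested by the abstract, to carry both upper and lower bounds through by the same argument. I would regularize each empirical measure via the heat semigroup at a scale $t=t_n\downarrow 0$: set $\mu_n^t:=e^{t\Delta}\mu_n$, $\nu_n^t:=e^{t\Delta}\nu_n$, with smooth densities $\rho_n^t,\eta_n^t$, and then use the Benamou--Brenier/Dacorogna--Moser ansatz, in which $\mu_n^t$ is transported to $\nu_n^t$ by the flow of $\nabla f_n^t$ with $-\Delta f_n^t = \rho_n^t-\eta_n^t$; to leading order the transport cost equals $\int|\nabla f_n^t|^2\,d\m$.

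The computation that produces the constant $\frac{1}{2\pi}$ is purely spectral. Let $(e_k,\lambda_k)_{k\ge 0}$ be the orthonormal eigenbasis of $-\Delta$, with $e_0\equiv 1$ and $\lambda_0=0$. Expanding $f_n^t$ in this basis and exploiting independence and uniformity of $(X_i),(Y_i)$ together with Parseval gives
\[
 \E{\int|\nabla f_n^t|^2\,d\m} \;=\; \frac{2}{n}\sum_{k\ge 1}\frac{e^{-2\lambda_k t}}{\lambda_k}\fullstop
\]
Weyl's law on a closed $2$-dimensional manifold with $\m(M)=1$ yields $\lambda_k\sim 4\pi k$, so the right-hand side is asymptotic to $\frac{1}{2\pi n}\log(1/t)$ as $t\downarrow 0$. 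Choosing $t_n$ of order $1/n$ then produces the claimed leading term $\frac{\log n}{2\pi n}$.

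To promote this heuristic to actual bounds on $\E{W_2^2(\mu_n,\nu_n)}$, I would combine two ingredients. First, a quantitative comparison between $W_2^2(\mu_n^t,\nu_n^t)$ and the linearized cost $\int|\nabla f_n^t|^2\,d\m$: the upper bound should follow from Benamou--Brenier applied to the affine interpolation $\rho_n^t + s(\eta_n^t-\rho_n^t)$ (which requires a suitable lower bound on $\rho_n^t$ holding with overwhelming probability), while the lower bound should follow from the negative-Sobolev duality $W_2\ge\|\cdot\|_{\dot H^{-1}}$ plus a controlled remainder. Second, the regularization error $\E{W_2^2(\mu_n,\mu_n^t)}$ must be shown to be $o(\log n/n)$ for $t=t_n$, so that Cauchy--Schwarz applied to the triangle inequality $|W_2(\mu_n,\nu_n)-W_2(\mu_n^t,\nu_n^t)|\le W_2(\mu_n,\mu_n^t)+W_2(\nu_n,\nu_n^t)$ closes the loop.

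I expect the principal obstacle to be exactly this regularization error, and it is likely the motivation for the refined contractivity estimate of the heat flow on random data announced in the abstract: because $\mu_n$ is singular, standard $W_2$-contractivity of $e^{t\Delta}$ on smooth data is unavailable, and a direct estimate must exploit randomness in a quantitative way. A secondary obstacle, specific to the generalization from the torus and the square to arbitrary closed surfaces, is that one can no longer rely on explicit Fourier series; one must instead use only the Weyl asymptotics and the on-diagonal heat kernel expansion $p_t(x,x)\sim 1/(4\pi t)$, verifying that curvature-dependent subleading contributions do not perturb the universal constant $\frac{1}{2\pi}$.
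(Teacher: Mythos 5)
Your sketch captures the regularization step, the Dacorogna--Moser/Benamou--Brenier ansatz, and the spectral computation that produces the constant $\frac{1}{2\pi}$; the latter is a legitimate alternative presentation of what the paper does via the trace formula of McKean--Singer (Theorem~\ref{thm:heat_ondiag} and Proposition~\ref{prop:on_diag_int}), since $\sum_{k\ge 1}\frac{e^{-2\lambda_k t}}{\lambda_k}=\int_{2t}^{\infty}\int_M(p_s(x,x)-1)\,d\m(x)\,ds$ and Weyl's law is equivalent to the on-diagonal asymptotic $p_t(x,x)\sim\frac{1}{4\pi t}$. You also correctly anticipate that the regularization error is the key obstacle, and that this is what the refined contractivity estimate is for.

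There are, however, two places where your proposal departs from what actually works. First, your lower bound ``$W_2\ge\|\cdot\|_{\dot H^{-1}}$ plus a controlled remainder'' is precisely the route of \cite{Ambrosio-Stra-Trevisan2018}, and it is exactly the part of that paper which was tailored to the flat torus and did not generalize to an arbitrary closed surface. The present paper deliberately \emph{avoids} any separate lower-bound argument: it constructs the candidate map $\hat T^{n,t}=\exp(\nabla f^{n,t})$, proves via Appendix~\ref{app:stability_flow} that its image measure is $W_2$-close to $\mu^{n,t}_1$, and then invokes the theorem that a $C^2$-small perturbation of the identity is already the optimal map (\cite[Theorem 1.1]{Glaudo19}), so that $W_2^2(\exp(\nabla f^{n,t})_\#\mu_0^{n,t},\mu_0^{n,t})=\int|\nabla f^{n,t}|^2\,d\mu_0^{n,t}$ \emph{exactly}, with no duality and no lower bound to prove separately. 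This is the single structural idea that makes the bipartite result available on a general manifold; it is absent from your sketch.

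Second, your choice ``$t_n$ of order $1/n$'' is self-defeating. The Bernstein-based flatness estimate (Theorem~\ref{thm:prob_flat_density}) forces $nt\xi^2\gg\log n$, and with $\xi=1/\log n$ this means $t\gtrsim\log^3 n/n$; at $t\asymp 1/n$ the event $A^{n,t}_\xi$ is simply not likely enough, and without it you have neither the density lower bound needed for Benamou--Brenier nor the $C^2$-smallness needed for optimality of $\exp(\nabla f^{n,t})$. But at $t\asymp\log^3 n/n$, the standard contractivity $\E{W_2^2(\mu^n,\mu^{n,t})}\lesssim t$ gives an error of order $\log^3 n/n$, which swamps the main term. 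This tension --- $t$ must be large for concentration, small for the regularization error --- is exactly what forces the refined contractivity bound $\E{W_2^2(\mu^n,\mu^{n,t})}\lesssim\log(nt)/n$ of Theorem~\ref{thm:near_stability_heat_kernel}. If you could legitimately take $t\asymp 1/n$, you would never need that theorem. So the internal logic of your sketch should be re-examined: you have identified the right lemma but placed it in the wrong causal chain.
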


In the context of the semi-matching problem we simplify 
the proof contained in \cite{Ambrosio-Stra-Trevisan2018} and strengthen the estimate of the error 
term provided in \cite{Ledoux18}.

\begin{theorem}[Main Theorem for semi-discrete matching]\label{thm:main_theorem_semidiscrete}
  Let $(M,\metric)$ be a $2$-dimensional compact closed manifold (or the square $\cc01^2$) 
  whose volume measure $\m$ is a probability. 
  Let $(X_i)_{i\in\N}$ be a family of independent random points $\m$-uniformly distributed on $M$.
  There exists a constant $C=C(M)$, such that
  \begin{equation*}
    \abs*{\E{W^2_2\left(\frac1n\sum_{i=1}^n\delta_{X_i}, \m\right)} - \frac{\log(n)}{4\pi n}} 
    \le C \frac{\sqrt{\log(n)\log\log(n)}}n \fullstop
  \end{equation*}
\end{theorem}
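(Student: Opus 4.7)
The plan is to follow the heat-flow regularization strategy of \cite{Ambrosio-Stra-Trevisan2018}, but to push the error analysis one step further using the refined contractivity estimate advertised in the abstract. Set $\mu^n := \tfrac{1}{n}\sum_{i=1}^n \delta_{X_i}$ and, for a small parameter $t = t(n) > 0$ to be tuned at the end, let $\mu^n_t := P_t \mu^n$ be its heat-semigroup regularization, which has smooth strictly positive density. The starting point is a sandwich
\begin{equation*}
  W_2^2(\mu^n_t, \mathfrak{m}) - \mathrm{Err}_{\downarrow}(n,t) \;\le\; W_2^2(\mu^n, \mathfrak{m}) \;\le\; W_2^2(\mu^n_t, \mathfrak{m}) + \mathrm{Err}_{\uparrow}(n,t),
\end{equation*}
where the upper error comes from a transport of $\mu^n$ onto $\mu^n_t$ along the heat flow (Dacorogna--Moser / flow-interchange estimate), while the lower error is the genuinely new ingredient: a sharpened heat contractivity that exploits the probabilistic cancellations specific to a random empirical measure.

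Once $\mu^n_t$ has density close to $1$, a Benamou--Brenier linearization gives
\begin{equation*}
  W_2^2(\mu^n_t, \mathfrak{m}) = \|\mu^n_t - \mathfrak{m}\|_{\dot H^{-1}(\mathfrak{m})}^2 + (\textrm{cubic remainder}),
\end{equation*}
and the expectation of the quadratic term admits an exact spectral formula. If $\{(\lambda_k,\phi_k)\}_{k\ge 1}$ is an $L^2(\mathfrak{m})$-orthonormal basis of Laplace eigenpairs with $\lambda_k>0$, then $\phi_k(X_i)$ are i.i.d.\ centered with variance $1$, hence
\begin{equation*}
  \mathbb{E}\bigl[\|\mu^n_t - \mathfrak{m}\|_{\dot H^{-1}}^2\bigr] = \frac{1}{n}\sum_{k\ge 1}\frac{e^{-2\lambda_k t}}{\lambda_k}.
\end{equation*}
By Weyl's law in dimension two, $\lambda_k = 4\pi k + O(\sqrt{k})$ since $\mathfrak{m}(M)=1$, so this sum equals $-\tfrac{1}{4\pi n}\log(1-e^{-8\pi t}) + O(1/n) = \tfrac{|\log t|}{4\pi n} + O(1/n)$. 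Choosing $t=t(n)$ with $|\log t(n)| = \log n + O(\sqrt{\log n\,\log\log n})$ makes the main spectral contribution equal to $\tfrac{\log n}{4\pi n}$ up to the target error.

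It remains to check that both regularization errors $\mathrm{Err}_{\uparrow/\downarrow}(n,t)$ and the cubic remainder in the linearization are $O(\sqrt{\log n\,\log\log n}/n)$ at this scale. The upper error is relatively standard: the transport cost along the heat flow is bounded by $t\cdot\mathbb{E}\|\nabla\log\mu^n_t\|_{L^2(\mu^n_t)}^2$, which is again controlled spectrally and turns out to be of order $t|\log t|/n$. The heart of the argument, and the main obstacle, is the lower bound: the naive contractivity $W_2(P_t\mu,P_t\nu)\le e^{-Kt}W_2(\mu,\nu)$ is far too wasteful, and one must prove a quantitative inequality of the form
\begin{equation*}
  W_2^2(\mu^n,\mathfrak{m}) \;\ge\; W_2^2(\mu^n_t,\mathfrak{m}) - C\,t\cdot(\textrm{a random oscillation of } \mu^n),
\end{equation*}
whose expectation, after exploiting concentration of the empirical measure, is of the right order $\sqrt{\log n\,\log\log n}/n$ rather than the crude deterministic $\log n/n$ one would obtain by Lipschitz contractivity alone. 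Assembling these ingredients with the optimal $t(n)$ yields Theorem~\ref{thm:main_theorem_semidiscrete}; the constant $\tfrac{1}{4\pi}$ (half of the $\tfrac{1}{2\pi}$ appearing in Theorem~\ref{thm:main_theorem_bipartite}) reflects that only one empirical measure contributes fluctuations to the spectral sum.
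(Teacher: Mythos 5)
Your proposal correctly identifies the skeleton (heat regularization, $\dot H^{-1}$ linearization, spectral computation of the leading term, error analysis), and your spectral identity $\E\bigl[\|\mu^n_t-\m\|_{\dot H^{-1}}^2\bigr]=\frac1n\sum_k\lambda_k^{-1}e^{-2\lambda_k t}$ is correct and equivalent to the paper's Lemma~\ref{lem:int_nabla_f} (the paper evaluates it via the trace formula rather than sharp Weyl asymptotics, but either works). However, the error analysis is the heart of the matter and your account of it has a genuine conceptual gap. You set up a one-sided sandwich $\mathrm{Err}_\uparrow/\mathrm{Err}_\downarrow$ and single out the lower bound as ``the genuinely new ingredient'' obtained from the refined contractivity. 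This mischaracterizes the paper's strategy. The paper's refined contractivity estimate (\cref{thm:near_stability_heat_kernel}) controls the \emph{symmetric} quantity $\E\bigl[W_2^2(\mu^n,\mu^{n,t})\bigr]\lesssim\frac{\log(nt)}{n}$ (with $t=\gamma\log^3 n/n$ this is $\sim\frac{\log\log n}{n}$), and by the triangle inequality this controls $\bigl|W_2(\mu^n,\m)-W_2(\mu^{n,t},\m)\bigr|$ from above and below alike; a central point of the paper, stated in the introduction, is precisely that one does \emph{not} need to treat upper and lower bounds separately. Your claimed bound for the regularization error, $t|\log t|/n\sim\log^4 n/n^2$, is moreover unjustified and almost certainly false: the paper's sharp value is $\sim\frac{\log\log n}{n}$, and this is what forces the $\sqrt{\log n\log\log n}/n$ error in the end. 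Your proposed Fisher-information route ($t\cdot\E\|\nabla\log\mu^n_t\|^2$) does not yield anything this good, since $\mu^n$ has infinite Fisher information and the contractivity gain is specifically a probabilistic effect for empirical measures.

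The second gap is the ``cubic remainder'' in passing from $W_2^2(\mu^{n,t},\m)$ to $\|\mu^{n,t}-\m\|_{\dot H^{-1}}^2$. You treat this as a routine Taylor error, but making it rigorous is nontrivial and you give no mechanism. The paper handles it by (i) restricting to the high-probability event $A^{n,t}_\xi=\{\|\nabla^2 f^{n,t}\|_\infty<\xi\}$ with $\xi=1/\log n$, which requires the Bernstein-type concentration estimate \cref{thm:prob_flat_density} built on the pointwise bounds for $\nabla^N q_t$ (\cref{prop:q_deriv}); (ii) inside that event, invoking \cite[Theorem~1.1]{Glaudo19} to conclude that $\exp(\nabla f^{n,t})$ is an \emph{exactly} optimal map, so $W_2^2(\exp(\nabla f^{n,t})_\#\m,\m)=\int|\nabla f^{n,t}|^2\de\m$ with no remainder at all; and (iii) using the flow-stability estimate of \cref{app:stability_flow} to show $W_2^2(\mu^{n,t},\exp(\nabla f^{n,t})_\#\m)\lesssim\xi^2\int|\nabla f^{n,t}|^2\de\m$. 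Without (i) you cannot control the remainder uniformly (the density $u^{n,t}$ need not be close to~$1$ pointwise), and without (ii) or an equivalent optimality statement the ``cubic remainder'' is not controlled by a simple power of $\|u^{n,t}-1\|_\infty$. So the proposal sketches a plausible scheme but omits precisely the two technical pillars that make it work: the concentration-of-measure argument for $A^{n,t}_\xi$ and the near-optimality criterion for small-$C^2$ transport maps.
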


In order to describe our approach let us focus on the semi-discrete matching problem.
Very roughly, we compute a first-order approximation of the optimal transport from $\m$ to 
$\frac1n\sum \delta_{X_i}$ and we show, overcoming multiple technical difficulties, that \emph{very often}
the said transport is \emph{almost optimal}. 
In some sense, this strategy is even closer to the
heuristics behind the ansatz proposed in \cite{CaraccioloEtAl2014}, compared to the strategy pursued in 
\cite{Ambrosio-Stra-Trevisan2018} (even though many technical points will be in common).

More in detail, here is a schematic description of the proof.

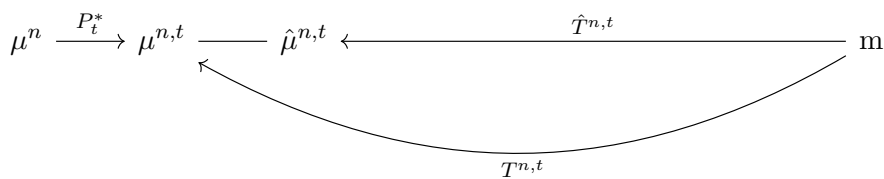
\begin{figure}[htb]
  \centering
  \begin{tikzcd}
\mu^n \arrow[r,"P_t^*"] &
\mu^{n,t} \arrow[r, dash] &
\hat\mu^{n,t} &[15em]
\m \arrow[l, "\hat T^{n,t}"'] \arrow[ll, bend left, "T^{n,t}"]
\end{tikzcd}
  \caption{Sketch of the proof strategy.}
\end{figure}

\begin{enumerate}
  \item Let us denote $\mu^n\defeq \frac1n\sum \delta_{X_i}$ the empirical measure. We construct a regularized
    version of $\mu^n$, called $\mu^{n,t}$, that is \emph{extremely} near to $\mu^n$ in the Wasserstein 
    distance.
  \item We consider a probabilistic event $A^{n,t}_\xi$ similar to $\{\norm{\mu^{n,t}-\m}_\infty<\xi\}$ and
    show that such an event is \emph{extremely} likely. The event considered is rather technical, 
    but should be understood as the intuitive event: ``the $X_i$ are well-spread on $M$''.
  \item With a known trick in optimal transport (Dacorogna-Moser coupling), we construct a transport map $T^{n,t}$ 
    from $\m$ to $\mu^{n,t}$.
  \item In the event $A^{n,t}_\xi$, we derive from $T^{n,t}$ an \textbf{optimal} map 
    $\hat T^{n,t}$ from $\m$ into a measure $\hat \mu^{n,t}$ that is extremely near 
    in the Wasserstein distance to $\mu^{n,t}$.
  \item We conclude computing the average cost of $\hat T^{n,t}$.
\end{enumerate}
The regularized measure $\mu^{n,t}$ is obtained from $\mu$ through the heat flow, namely 
$\mu^{n,t}=P^*_t(\mu^n)$ where $t>0$ is a suitably chosen time (see \cref{subsec:heat_flow_reg} for the definition of $P^*_t$). In 
\cref{sec:extreme_contractivity} we develop an improved contractivity estimate for the heat 
flow on random data and we use it to show that $\mu^n$ and $\mu^{n,t}$ are sufficiently near 
in the Wasserstein distance.
The probabilistic event $A^{n,t}_\xi$ is defined and studied in \cref{sec:probability}. Let us remark
that this is the only section where probability theory plays a central role.
The map $T^{n,t}$ is constructed in \cref{sec:transport_ineq} as the flow of a vector field. 
The map $\hat T^{n,t}$ is simply the exponential map applied on (a slightly modified version of) 
the same vector field. Optimality of $\hat T^{n,t}$ follows from the fact that it 
has small $C^2$-norm (see \cite[Theorem 1.1]{Glaudo19} or \cite[Theorem 13.5]{Villani09}). We will devote 
\cref{app:stability_flow} to showing that $\mu^{n,t}$ and $\hat\mu^{n,t}$ are sufficiently near in the
Wasserstein distance.

In \cref{sec:semidiscrete_matching,sec:bipartite_matching} we will prove our main theorems for the
semi-discrete matching problem and the bipartite matching problem. The proofs are almost equal.

Differently from the proof contained in \cite{Ambrosio-Stra-Trevisan2018}, we do not need the duality theory
of optimal transport as it is completely encoded in the mentioned theorem stating that a \emph{small} 
map is optimal.
In this way, we do not need to manage the upper-bound and the lower-bound of the expected cost separately.
Last but not least, as will be clear by the comments scattered through the paper, our proof is just one little step away from generalizing the results
also to weighted manifolds and manifolds with boundary.

\subsection*{Acknowledgement}
We thank F.Stra and D.Trevisan for useful comments, during the development of the paper, and the paper's reviewers
for their constructive and detailed observations.
The second author is funded by the European Research Council under the Grant Agreement
No. 721675 “Regularity and Stability in Partial Differential Equations (RSPDE)”.

\section{General Setting and Notation}
The setting and definitions we are going to describe in this section will be used in the whole paper.
Only in the section devoted to the bipartite matching we are going to change slightly some definitions.

Whenever we say $A\lesssim B$ we mean that there exists a positive constant $C=C(M)$ that 
depends only on the manifold $M$ such that $A\le CB$. If we add a subscript like $\lesssim_p$ it 
means that the implicit constant is allowed to depend also on $p$.
Even though this notation might be a little confusing initially, 
it is extremely handy to avoid the introduction of a huge number of meaningless constant factors.

With $\prob(M)$ we will denote the set of Borel probability measures on the manifold $M$.

\subsection{Ambient Manifold}\label{sub:setting}
Let $M$ be a closed compact $2$-dimensional Riemannian manifold and let $\m$ be its volume measure. We
will always work, under no real loss of generality, under the assumption that $\m$ is a probability measure. 
Unless stated otherwise, this will be the ambient space for all our results.

It is very tempting to work in the more general setting of weighted/with boundary manifolds. 
Indeed it might seem that most of what we obtain could be easily achieved also for 
weighted/with boundary manifolds.
Nonetheless, there is an issue that we could not solve. The estimate on the derivatives of the heat kernel
we use, specifically \cref{thm:heat_deriv}, seems to be known in literature only for a 
closed and nonweighted manifold. Apart from that single result (that most likely holds also 
in the weighted setting if the weight is sufficiently smooth) everything else can be easily 
adapted to the weighted and nonclosed setting. 
In an appendix to this work, we manage to extend the mentioned estimate to the case of the 
square (and thus all our results apply also when $M=\cc01^2$). 

By weighted manifold we mean a Riemannian manifold where the measure is perturbed as $\m_V=e^{-V}\m$ where
$V:M\to\R$ is a smooth function (i.e. the weight). The matching problem is very susceptible to the
change of the reference measure (as the case of Gaussian measures, recently considered in
\cite{Ledoux2017,Ledoux18,Talagrand2018}, illustrates) and therefore gaining the possibility to add a weight would broaden
the scope of our results.

Even though we are not able to generalize \cref{thm:heat_deriv}, during the paper we will outline 
what are the changes necessary to make everything else work in the weighted/with boundary case. 

Let us say now the fundamental observation that is needed to handle the weighted/with boundary case: 
we have to adopt the \emph{right} definition of Laplacian.

In the weighted (or even with boundary) setting, the \emph{standard} Laplacian must be replaced by the so called drift-Laplacian (still denoted $\lapl$ for consistency), also named Witten Laplacian, characterized by the identity
\begin{equation}\label{eq:def_laplacian}
  \int_M -\lapl u \cdot\varphi\de \m_V = \int_M \nabla u\cdot\nabla \varphi\de\m_V 
\end{equation}
 for any 
$\varphi\in C^\infty(M)$. This operator is related to the standard Laplace-Beltrami operator $\tilde\lapl$ 
by  $\lapl = \tilde\lapl - \nabla V\cdot\nabla$ (see~\cite{Grigor06}) and, in the case of manifolds with 
boundary, \cref{eq:def_laplacian} encodes the null Neumann boundary condition. 
Using this definition everywhere, almost all the statements and proofs that we provide in the nonweighted closed setting can be adapted straight-forwardly to the weighted/with boundary setting.

\subsection{Random Matching Problem Notation}
\subsubsection{Empirical measures}
Let $(X_i)_{1\le i\le n}$ be a family of independent random points $\m$-uniformly distributed on $M$.
Let us define the empirical measure associated to the family of random points
\begin{equation*}
  \mu^n \defeq \frac1n \sum_{i=1}^n \delta_{X_i} \fullstop
\end{equation*}
When two independent families $(X_i)$ and $(Y_i)$ of random points $\m$-uniformly distributed on $M$ will
be considered, we will denote with $\mu_0^n$ and $\mu_1^n$ the empirical measures associated respectively to
$(X_i)$ and $(Y_i)$.

The main topic of this paper is the study the two quantities
\begin{equation*}
  \E{W_2^2(\mu^n, \m)} \text{ and } \E{W_2^2(\mu_0^n, \mu_1^n)} \fullstop
\end{equation*}

\subsubsection{Wasserstein distance}
The quadratic Wasserstein distance, denoted by $W_2(\emptyparam, \emptyparam)$, is the distance induced on
probability measures by the quadratic optimal transport cost
\begin{equation*}
  W_2^2(\mu, \nu) \defeq \min_{\pi\in\Gamma(\mu, \nu)} \int_{M\times M} d(x, y)^2\de\pi(x, y) \comma
\end{equation*}
where $\Gamma(\mu,\nu)$ is the set of Borel probability measures on the product $M\times M$ whose first and 
second marginals are $\mu$ and $\nu$ respectively. See the monographs \cite{Villani09} or \cite{Santambrogio15} for
further details.
Let us recall that when both measures are given by a sum of Dirac masses the Wasserstein distance becomes the
more elementary bipartite matching cost
\begin{equation*}
  W_2^2(\mu_0^n, \mu_1^n) = \frac1n \min_{\sigma\in S^n}\sum_{i=1}^n d\left(X_i, Y_\sigma(i)\right)^2 \fullstop  
\end{equation*}

\subsubsection{Heat flow regularization}\label{subsec:heat_flow_reg}
For any positive time $t>0$, let $\mu^{n,t}$ be the evolution through the heat flow of $\mu^n$, that is
\begin{equation*}
  \mu^{n,t} = P^*_t(\mu^n) = \left(\frac1n \sum_{i=1}^n p_t(X_i, \emptyparam)\right)\m = u^{n,t}\m \comma
\end{equation*}
where $u^{n,t}$ is implicitly defined as the density of $\mu^{n,t}$ with respect to $\m$. Let us recall that
$P_t^*$ denotes the heat semigroup on the space of measures and $p_t(\emptyparam,\emptyparam)$ is the
heat kernel at time $t$. For some background on the heat flow on a compact Riemannian manifold, 
see for instance~\cite[Chapter 6]{Chavel84}.

Why are we regularizing $\mu^n$ through the heat flow? 
First of all let us address a simpler question: why are we
regularizing at all? The intuition is that regularization allows us to ignore completely the 
\emph{small-scale bad behaviour} that is naturally associated with the empirical measure. 
For example, the regularization is necessary to gain the uniform estimate we will show 
in \cref{thm:prob_flat_density}.

But why the heat flow? A priori any kind of good enough convolution kernel that depends on a parameter 
$t>0$ would fit our needs. Once again the intuition is pretty clear: the heat flow is the \emph{best way} to 
go from the empirical measure to the standard measure. Indeed it is well known from \cite{Jordan-Kinderlehrer-Otto98} that the heat flow can be
seen as the gradient flow in the Wasserstein space induced by the relative entropy functional (see \cite{Erbar2010}
for the extension of \cite{Jordan-Kinderlehrer-Otto98} to Riemannian manifolds). More practically, the semigroup property 
of the heat kernel provides a lot of identities
and estimates and plays a crucial role also in the proof of the
refined contractivity property of \cref{sec:extreme_contractivity}.

\subsubsection{The potential $f^{n,t}$}
It is now time to give the most important definition. Let $f^{n,t}:M\to\R$ be the unique function with null
mean such that
\begin{equation}\label{eq:definition_f}
  -\lapl f^{n,t} = u^{n,t} - 1 \fullstop
\end{equation}
The hidden idea behind this definition, underlying the ansatz of \cite{CaraccioloEtAl2014}, 
is a linearization of the Monge-Ampère equation under the assumption
that $u^{n,t}$ is already extremely near to $1$. We suggest the reading of the introduction of 
\cite{Ambrosio-Stra-Trevisan2018} for a deeper explanation. 
The mentioned linearization hints us that $\nabla f^{n,t}$ should be an approximate optimal map from
the measure $\mu$ to the measure $\mu^{n,t}$. We will see in \cref{prop:transport_bound} and later
in \cref{thm:main_theorem_semidiscrete} that this is indeed true.

Let us remark that, as much as possible, we try to be consistent with the notation used in 
\cite{Ambrosio-Stra-Trevisan2018}.

\section{Flatness of the Regularized Density}\label{sec:probability}
\begin{definition}[Norm of a tensor]\label{def:tensor_infinity_norm}
  Given a $2$-tensor field $\tau\in T^0_2(M)$, let the operator norm at a point $x\in M$ be defined as
  \begin{equation*}
    \abs{\tau(x)} = \sup_{u,v\in T_xM\setminus\{0\}} \frac{\abs{\tau(x)[u,v]}}{\abs{u}\abs{v}} \fullstop
  \end{equation*}
  The infinity norm of the tensor $\tau$ is then defined as the supremum of the pointwise norm
  \begin{equation*}
    \norm{\tau}_\infty = \sup_{x\in M}\abs{\tau(x)}\fullstop
  \end{equation*}
\end{definition}
\begin{remark}
  With this norm, it holds
  \begin{equation*}
    \abs{X(\gamma(1))-\Xpar} \le \Length(\gamma)\cdot\norm{\nabla X}_\infty
  \end{equation*}
  whenever $\gamma:\cc01\to M$ is a smooth curve, $X$ is a smooth vector field and 
  $\Xpar\in T_{\gamma(1)}M$ is the parallel transport of $X(\gamma(0))$ onto $T_{\gamma(1)}M$
  along $\gamma$.
\end{remark}

For a fixed $\xi>0$, we want to investigate how unlikely is the event
\begin{equation*}
  A^{n,t}_\xi \defeq \left\{\norm{\nabla^2 f^{n,t}}_\infty < \xi\right\} \fullstop
\end{equation*}
Let us recall that \cref{eq:definition_f} implies 
\begin{equation*}
  \norm{u^{n,t}-1}_\infty = \norm{-\lapl f^{n,t}}_\infty\lesssim \norm{\nabla^2 f^{n,t}}_\infty \comma
\end{equation*}
thus the event $A^{n,t}_\xi$ is very similar to (and is contained in) the event considered 
in \cite[Proposition 3.10]{Ambrosio-Stra-Trevisan2018}.
Let us remark that taking a larger $t>0$ will of course assure us that $A^{n,t}_\xi$ is extremely likely, but,
as we will see, taking a $t>0$ that is too large is unfeasible.

Our goal is showing the following estimate on the probability of $A^{n,t}_\xi$.
\begin{theorem}\label{thm:prob_flat_density}
  There exists a constant $a=a(M)>1$ such that, for any $n\in\N$, $0<\xi<1$ and $0<t<1$ it holds
  \begin{equation*}
    \P{(A^{n,t}_\xi)^{\complement}}\lesssim\frac1{\xi^2t^3}a^{-nt\xi^2} \fullstop
  \end{equation*}
\end{theorem}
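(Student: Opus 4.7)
The plan is to observe that $\nabla^2 f^{n,t}$ is an empirical average of centered i.i.d.\ tensor fields, to apply a Bernstein-type concentration pointwise, and to upgrade to a uniform bound by a net on $M$ controlled by a deterministic estimate on $\nabla^3 f^{n,t}$. Setting
\begin{equation*}
  h_t(y,x)\defeq\int_t^{\infty}\bigl(p_s(y,x)-1\bigr)\de s\comma
\end{equation*}
a direct computation gives $-\lapl_x h_t(y,\cdot)=p_t(y,\cdot)-1$ and $\int_M h_t(y,\cdot)\de\m(y)\equiv 0$, so that $f^{n,t}(x)=\tfrac1n\sum_i h_t(X_i,x)$ and
\begin{equation*}
  \nabla^2 f^{n,t}(x)=\frac1n\sum_{i=1}^n\nabla_x^2 h_t(X_i,x),
\end{equation*}
the summands being centered since $\m$ is invariant under the heat flow.

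Fix $x\in M$ and unit vectors $u,v\in T_xM$. The scalar variables $\zeta_i\defeq\nabla_x^2 h_t(X_i,x)[u,v]$ are i.i.d.\ and mean zero. The sup bound $|\zeta_i|\lesssim t^{-1}\log(1/t)$ follows from \cref{thm:heat_deriv} applied to $h_t=P_{t/2}(P_{t/2}G)$, while Minkowski's integral inequality gives
\begin{equation*}
  \E{\zeta_i^2}\le\int_M|\nabla_x^2 h_t(y,x)|^2\de\m(y)\le\Bigl(\int_t^\infty\|\nabla_x^2 p_s(\cdot,x)\|_{L^2(\m)}\de s\Bigr)^2\lesssim\frac1t\comma
\end{equation*}
where the last step uses the $2$-dimensional parabolic regularisation $\|\nabla_x^2 p_s(\cdot,x)\|_{L^2(\m)}\lesssim s^{-3/2}$ (easily obtained by spectral calculation, or from $\|\nabla^2 P_{s/2}\|_{2\to 2}\lesssim s^{-1}$ and $\|p_{s/2}(\cdot,x)\|_{2}\lesssim s^{-1/2}$). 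A standard Bernstein inequality then yields
\begin{equation*}
  \P\left(\Bigl|\frac1n\sum_i\zeta_i\Bigr|>\frac\xi2\right)\lesssim a^{-nt\xi^2}
\end{equation*}
for some $a=a(M)>1$, in the relevant regime $0<\xi<1$, $0<t<1$; this is the source of the exponential factor in the claim.

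To turn this pointwise estimate into uniform control of $\|\nabla^2 f^{n,t}\|_\infty$, I would use the deterministic a priori bound $\|\nabla^3 f^{n,t}\|_\infty\le\|\nabla^3 h_t\|_\infty\lesssim t^{-3/2}$, obtained by differentiating under the integral defining $h_t$ and using $|\nabla^3 p_s|\lesssim s^{-5/2}$ in dimension two. Choosing a $\delta$-net on $M$ with $\delta\simeq\xi t^{3/2}$ (cardinality $\lesssim 1/(\xi^2 t^3)$) makes the oscillation of $\nabla^2 f^{n,t}$ on each ball at most $\xi/2$, while a net of bounded cardinality on the unit sphere bundle handles the test directions $(u,v)$. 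A union bound over the net produces the claimed $\frac1{\xi^2 t^3}\,a^{-nt\xi^2}$. The main obstacle is the variance bound: the naive deterministic estimate $\|\nabla_x^2 h_t\|_\infty^2\lesssim t^{-2}\log^2(1/t)$ only produces a Bernstein exponent $\sim n t^2\xi^2$, so obtaining the correct linear-in-$t$ exponent genuinely requires $L^2$ cancellation inside the heat-kernel integral, as exploited above; the tensor-valued character of $\nabla^2 f^{n,t}$ is only a mild inconvenience handled by testing against unit directions.
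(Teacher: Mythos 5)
Your overall architecture matches the paper's proof: express $\nabla^2 f^{n,t}$ as an empirical average of i.i.d.\ centered tensors via the kernel $q_t(x,y)=\int_t^\infty(p_s(x,y)-1)\,\de s$ (your $h_t$), apply Bernstein pointwise, and then pass to a uniform estimate through an $\ell$-net whose mesh $\ell\simeq \xi t^{3/2}$ is dictated by the deterministic bound $\norm{\nabla^3 q_t}_\infty\lesssim t^{-3/2}$, producing the prefactor $\ell^{-2}\simeq\xi^{-2}t^{-3}$. The variance estimate $\E{\zeta_i^2}\lesssim t^{-1}$ is correct and coincides with what the paper extracts from \cref{cor:q_four} with $N=2$, $p=2$.

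There is, however, a genuine gap in your almost-sure bound. You claim $\abs{\zeta_i}\lesssim t^{-1}\log(1/t)$ via the factorization $h_t=P_{t/2}(P_{t/2}G)$, which is correct but suboptimal; the logarithm there comes from bounding $\norm{P_{t/2}G}_\infty$ by $\log(1/t)$ and is not needed. The superfluous logarithm is not cosmetic: feeding it into Bernstein gives an exponent
\begin{equation*}
  \frac{n(\xi/2)^2}{2\sigma^2+\tfrac23 L\,(\xi/2)}
  \gtrsim \frac{n t\xi^2}{1+\xi\log(1/t)}\comma
\end{equation*}
and when $\xi\log(1/t)$ is large (e.g.\ $\xi$ fixed and $t\downarrow 0$) this behaves like $nt\xi/\log(1/t)$, which is strictly weaker than the claimed $nt\xi^2$ and cannot be absorbed into the base $a$ uniformly in $0<t<1$. (You single out the variance estimate as ``the main obstacle,'' but the sup bound matters just as much for the second term in the Bernstein denominator.) The fix is to prove the sharp pointwise estimate $\abs{\nabla^2_y q_t(x,y)}\lesssim (d^2(x,y)+t)^{-1}$, hence $\abs{\zeta_i}\lesssim t^{-1}$ with no logarithm. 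This is exactly what the paper does in \cref{prop:q_deriv}: inserting \cref{thm:heat_estimate,thm:heat_deriv} into the representation $q_t=\int_t^\infty(p_s-1)\,\de s$ and substituting $w=d^2/s$ gives $\abs{\nabla^N q_t}\lesssim (d^N+t^{N/2})^{-1}$ for all $N\ge 1$, which yields the clean $L^\infty$, $L^2$ and $\nabla^3$ bounds you need in one stroke. With that single sharpening your proof becomes correct and coincides in substance with the paper's.
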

The proof of \cref{thm:prob_flat_density} will follow rather easily via a standard 
concentration inequality once we have established some nontrivial inequalities 
concerning the heat kernel.

In fact, a vast part of this section will be devoted to a fine study of $q_t$, that is 
a \emph{time-averaged} heat kernel.
\begin{definition}
  Let us denote with $q_t:M\times M\to\R$ the unique function with null mean value such that 
  $-\lapl q_t(x,y) = p_t(x,y)-1$, where the Laplacian is computed with respect to the second variable.
\end{definition}

\begin{remark}
  All the derivatives of $q_t$ will be performed on the second variable.
  In the weighted and with boundary setting the definition of $q_t$ stays the same, 
  whereas the Laplace operator changes meaning (as explained in \cref{sub:setting}).
\end{remark}
\begin{remark}
  The kernel $q_t$ arises naturally in our investigation because, as we will see later, it holds
  \begin{equation*}
    f^{n,t}(y) = \frac1n\sum_{i=1}^n q_t(X_i, y)\fullstop
  \end{equation*}
\end{remark}

Let us show some properties of the kernel $q_t$.
As a consequence of the decay of the heat kernel when $t$ goes to infinity, for any $x,y\in M$ and $t>0$, it holds
\begin{equation*}
    \lapl_y \int_t^{\infty} p_s(x,y)-1\de s = \int_t^{\infty} \lapl_y p_s(x,y)\de s
    = \int_t^{\infty} \frac{\de}{\de s} p_s(x,y)\de s = 1 - p_t(x,y) \comma
\end{equation*}
therefore we have the fundamental identity
\begin{equation}\label{eq:q_expr}
  q_t(x,y) = \int_t^{\infty}\left(p_s(x,y)-1\right)\de s \fullstop
\end{equation}
Let us also remark that $q_t$ is symmetric $q_t(x,y) = q_t(y, x)$. 
Furthermore, for all $y\in M$ the average value of $\nabla_y q_t(\emptyparam, y)$ is null, indeed it holds
\begin{align*}
  \int_M \nabla_y q_t(x, y)\de\m(x) 
  &= \int_M \nabla_y\left(\int_t^{\infty}\left(p_s(x,y)-1\right)\de s\right)\de\m(x) \\
  &= \int_t^{\infty}\nabla_y\left(\int_M p_s(x,y)\de\m(x)\right)\de s
  = \int_t^{\infty}\nabla_y(1) \de s = 0 \fullstop
\end{align*}

Similarly we can prove that the average value is null also for higher derivatives.

Now we want to deduce some estimates for the time-averaged kernel $q_t$ from the related estimates for
the standard heat kernel.
Therefore let us start stating some well-known estimates related to the heat kernel. The interested reader
can find more about heat kernel estimates on the monographs \cite{saloff2010,grigor1999}.

\begin{theorem}[Trace Formula]\label{thm:heat_ondiag}
  It holds
  \begin{equation*}
    \int_M \left(p_t(x,x)-1\right)\de\m(x) = \frac 1{4\pi t} + \bigo\left(\frac 1{\sqrt{t}}\right)\fullstop
  \end{equation*}
\end{theorem}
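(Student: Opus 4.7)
The plan is to split according to whether $t$ is large or small, handling each regime with a different classical tool: spectral decay for large $t$, and the short-time on-diagonal asymptotic of the heat kernel for small $t$.

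First, I would spectrally resolve the heat kernel as $p_t(x,y)=\sum_{k\ge 0}e^{-\lambda_k t}\phi_k(x)\phi_k(y)$, where $(\phi_k)_{k\ge 0}$ is an $L^2(\m)$-orthonormal basis of $-\lapl$-eigenfunctions with eigenvalues $0=\lambda_0<\lambda_1\le\dots$ and $\phi_0\equiv 1$ (using $\m(M)=1$ and connectedness of $M$). Evaluating on the diagonal and integrating isolates the constant mode:
\begin{equation*}
\int_M \bigl(p_t(x,x)-1\bigr)\de\m(x)=\sum_{k\ge 1}e^{-\lambda_k t}\fullstop
\end{equation*}
For $t\ge 1$ I would then bound the series by $e^{-\lambda_1(t-1)}\sum_{k\ge 1}e^{-\lambda_k}\lesssim e^{-\lambda_1 t/2}$, which decays exponentially; since $(4\pi t)^{-1}\lesssim t^{-1/2}$ in this range, the estimate is trivial. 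The finiteness of $\sum_{k\ge 1}e^{-\lambda_k}$ requires only $\sup_x p_1(x,x)<\infty$, not any refined asymptotic.

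For $0<t\le 1$, the key input is the classical Minakshisundaram--Pleijel short-time asymptotic on a closed $2$-dimensional manifold: uniformly in $x\in M$,
\begin{equation*}
\abs*{p_t(x,x)-\frac{1}{4\pi t}}\lesssim 1\fullstop
\end{equation*}
Integrating against $\m$ and using $\m(M)=1$ gives $\int_M p_t(x,x)\de\m(x)=\frac{1}{4\pi t}+\bigo(1)$. The remainder $\bigo(1)$ is dominated by $\bigo(t^{-1/2})$ on $(0,1]$, and combining with the large-$t$ regime yields the stated estimate.

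The main obstacle is the uniform short-time diagonal estimate above. It is a standard fact in spectral geometry, following from a parametrix construction in normal coordinates where the Euclidean heat kernel $\frac{1}{4\pi t}$ provides the leading term and Duhamel's principle bounds the curvature-driven correction by a constant depending only on $M$. I would invoke this as a known result rather than reproduce its proof; everything else in the argument is elementary spectral bookkeeping.
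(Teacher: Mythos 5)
Your argument is correct for a closed manifold, and it is in fact more self-contained than the paper's proof, which is simply a pointer to the literature (McKean--Singer for smooth manifolds possibly with smooth boundary, Brown for Lipschitz domains, Charalambous--Lu for the weighted case). The spectral reduction $\int_M (p_t(x,x)-1)\,\de\m(x)=\sum_{k\ge 1}e^{-\lambda_k t}$ together with the Minakshisundaram--Pleijel expansion is clean, and in the closed case you even recover the stronger error term $\bigo(1)$ in place of the stated $\bigo(t^{-1/2})$.

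The gap is that the theorem is also applied with $M=\cc01^2$, and there your key pointwise input $\abs*{p_t(x,x)-\frac{1}{4\pi t}}\lesssim 1$ is false near $\partial M$. By the image construction of the Neumann heat kernel (the one made explicit in \cref{app:square}), for a point at distance $\delta$ from a side one has $p_t(x,x)=\frac{1}{4\pi t}\left(1+e^{-\delta^2/t}+\cdots\right)$, so the diagonal discrepancy is of order $1/t$ on the boundary collar of width $\sim\sqrt t$. Integrating over that collar produces a contribution of order $t^{-1/2}$ (the classical second heat coefficient, proportional to the perimeter), and this is exactly why the statement carries the error $\bigo(t^{-1/2})$ rather than $\bigo(1)$. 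To cover the square you would need to replace the uniform pointwise estimate by an argument that isolates and integrates this boundary layer (and the corner contributions) --- for instance via the explicit image-sum kernel of \cref{app:square} --- or cite the trace formula for Lipschitz domains as the paper does.
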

\begin{proof}
  It is proved in \cite{McKeanSinger67} for smooth manifolds, possibly with smooth boundary.
  To handle the case of the square, we need the formula for Lipschitz domains that is proved
  in \cite{brown1993}.
  For weighted manifolds it is proven in \cite[Theorem 1.5]{charalambous2017}.
\end{proof}

\begin{theorem}[Heat Kernel Estimate]\label{thm:heat_estimate}
  There exists a suitable $a=a(M)>1$ such that, for any $0<t<1$ and $x,y\in M$, it holds
  \begin{equation*}
    p_t(x,y) \lesssim \frac 1t \,a^{-\frac{d^2(x,y)}{t}} \fullstop
  \end{equation*}
\end{theorem}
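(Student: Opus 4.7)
The plan is to import the Gaussian upper bound from the classical heat-kernel literature (Grigor'yan, Saloff-Coste, Davies) and then package it in the form stated. On a compact manifold the Ricci curvature is bounded below and the volume growth is two-dimensional (comparable to $r^2$ on small scales), so all the hypotheses for the standard theory are satisfied; it is really a matter of choosing the most efficient route.

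The route I would take is Davies' exponential perturbation method. First I would record the on-diagonal estimate $p_t(x,x)\lesssim 1/t$ for $0<t<1$. This follows from Nash's inequality together with the 2-dimensional volume growth $\m(B_r(x))\gtrsim r^2$, or equivalently from ultracontractivity $\|P_t\|_{L^1\to L^\infty}\lesssim 1/t$; one may also quote the Minakshisundaram--Pleijel short-time expansion $p_t(x,x) = \frac{1}{4\pi t}(1+O(t))$, which is already implicit in \cref{thm:heat_ondiag}. By Cauchy--Schwarz applied to $p_{2t}(x,y) = \int_M p_t(x,z)p_t(z,y)\de\m(z)$ we deduce $p_t(x,y)\le \sqrt{p_t(x,x)p_t(y,y)}\lesssim 1/t$, so the full Gaussian bound with $d(x,y)=0$ is in place.

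To upgrade this to an off-diagonal Gaussian decay I would conjugate the heat semigroup by $e^{s\psi}$, where $\psi$ is a 1-Lipschitz function on $M$. A direct energy computation on $w(t,\cdot)\defeq e^{s\psi} P_t(e^{-s\psi}f)$ shows
\begin{equation*}
\frac{\de}{\de t}\int_M w^2\de\m \;=\; -2\int_M |\nabla w|^2\de\m + 2s^2\int_M |\nabla\psi|^2 w^2\de\m \;\le\; 2s^2\int_M w^2\de\m,
\end{equation*}
so the conjugated semigroup is bounded on $L^2$ by $e^{s^2 t}$. Combined with the on-diagonal ultracontractive bound one obtains, via the standard Davies argument,
\begin{equation*}
p_t(x,y)\;\lesssim\;\frac{1}{t}\exp\!\left(s\bigl(\psi(x)-\psi(y)\bigr)+s^2 t\right)
\end{equation*}
for every 1-Lipschitz $\psi$ and every $s\in\R$. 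Choosing $\psi(\cdot)=d(y,\cdot)$ and optimizing over $s$ (namely $s=-d(x,y)/(2t)$) yields $p_t(x,y)\lesssim \frac{1}{t}\exp(-d^2(x,y)/(4t))$, which is exactly the claim with $a = e^{1/4}$ (any constant slightly worse than $4$ in the exponent is harmless since the $\lesssim$-constant may absorb it).

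The main obstacle, such as it is, lies in verifying that Davies' perturbation argument applies cleanly in the manifold setting used here, including the case $M=\cc01^2$ where one must work with the Neumann heat kernel: the key point is that $|\nabla\psi|\le 1$ still holds for $\psi=d(y,\cdot)$ and that integration by parts produces no boundary term thanks to the Neumann condition encoded in \cref{eq:def_laplacian}. Once this is checked, the estimate is immediate. For large $d(x,y)$ (of order the diameter of $M$) the bound is automatic because $d(x,y)$ is bounded and the on-diagonal estimate suffices; the content of the theorem is really the behavior at scales $d(x,y)\gg\sqrt{t}$, which Davies' method handles uniformly.
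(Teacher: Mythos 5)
The paper's own ``proof'' of \cref{thm:heat_estimate} is a pure citation: \cite[Theorem 4]{ChengLiYau81} for closed manifolds, \cite{Jiang-Li-Zhang15} for the weighted and boundary cases, with the square handled by the explicit reflection construction in the appendix. You instead sketch a self-contained derivation via Davies' exponential-perturbation method. Both are classical and both yield the statement, but they are genuinely different routes. Cheng--Li--Yau proceed through parabolic maximum-principle and comparison arguments in the spirit of Li--Yau gradient estimates, which gives sharp geometric control of the constants; Davies' method is operator-theoretic, conjugating the Dirichlet form by $e^{s\psi}$ and trading an $e^{s^2t}$ loss in $L^2$ contractivity for Gaussian decay. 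The Davies route is more robust: as you note, it covers the Neumann square with no extra work because the integration by parts sees no boundary term, and it extends to general Dirichlet forms where pointwise gradient estimates are unavailable.

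Your energy identity for $w = e^{s\psi}P_t(e^{-s\psi}f)$ is correct, and the optimization $s = -d(x,y)/(2t)$ with $\psi=d(y,\cdot)$ is the right choice. The one place where your write-up leans on the literature rather than an argument is the sentence ``combined with the on-diagonal ultracontractive bound one obtains, via the standard Davies argument, $p_t(x,y)\lesssim t^{-1}\exp(s(\psi(x)-\psi(y))+s^2t)$.'' The $L^2\to L^2$ bound for the twisted semigroup does not directly give a pointwise kernel bound: one must also control $L^1\to L^2$ and $L^2\to L^\infty$ for the \emph{twisted} semigroup, and this is where Davies' argument has real content. The standard way to close it is to show that a Nash or logarithmic Sobolev inequality for the Dirichlet form survives the conjugation by $e^{s\psi}$ (with only the $e^{s^2t}$ perturbation), which then gives ultracontractivity of $P_t^{s\psi}$ with the same $1/t$ decay. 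Since the paper itself cites the estimate without proof, deferring this step to Davies' book is entirely reasonable here, but you should be aware that ``a direct energy computation'' does not reach the pointwise bound on its own: the Nash/log-Sobolev persistence is the hinge of the whole method.
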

\begin{proof}
    It is proved in \cite[Theorem 4, Remark at page 1050]{ChengLiYau81}.
    For the proof in the weighted and with boundary setting, see \cite[Theorem 1.2]{Jiang-Li-Zhang15}.
\end{proof}

\begin{theorem}[Heat Kernel Derivatives Estimate]\label{thm:heat_deriv}
  For any $N\ge 1$, $0<t<1$ and $x,y\in M$, it holds
  \begin{equation*}
    \abs{\nabla^N p_t(x,y)} \lesssim_N
    \left(\frac 1{t^{\frac N2}} + \frac{d^N(x,y)}{t^N}\right)p_t(x,y) \fullstop
  \end{equation*}
\end{theorem}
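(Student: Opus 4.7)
The plan is to prove the estimate by combining the semigroup identity, interior parabolic regularity for the heat equation, and the pointwise Gaussian upper bound furnished by \cref{thm:heat_estimate}.

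First, I would exploit the semigroup identity
\[
p_t(x,y) = \int_M p_{t/2}(x,z)\, p_{t/2}(z,y)\, \de\m(z)
\]
and differentiate $N$ times in $y$ under the integral sign:
\[
\nabla^N_y p_t(x,y) = \int_M p_{t/2}(x,z)\, \nabla^N_y p_{t/2}(z,y)\, \de\m(z).
\]
This reduction trades a derivative of the singular kernel $p_t$ for a derivative of $p_{t/2}(z,\cdot)$, which is a smooth solution of the heat equation on the fixed compact manifold $M$ and hence amenable to standard interior estimates.

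Second, I would apply Bernstein-type interior parabolic estimates to $\partial_s u = \lapl u$ to obtain
\[
\abs{\nabla^N_y p_{t/2}(z,y)} \lesssim_N \frac{1}{t^{N/2}} \sup_{\substack{s\in[t/4,t/2] \\ w\in B(y,\sqrt t)}} p_s(z,w),
\]
which reflects the parabolic scaling: $N$ spatial derivatives cost $t^{-N/2}$. Combining with \cref{thm:heat_estimate} yields
\[
\abs{\nabla^N_y p_t(x,y)} \lesssim_N \frac{1}{t^{N/2+1}} \int_M p_{t/2}(x,z) \sup_{w\in B(y,\sqrt t)} a^{-c\,d^2(z,w)/t}\, \de\m(z).
\]

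Third, I would distinguish two regimes. In the near-diagonal regime $d(x,y)\lesssim\sqrt t$, the right-hand side is $\lesssim t^{-N/2-1}\lesssim t^{-N/2}\,p_t(x,y)$, since $p_t(x,y)\gtrsim t^{-1}$ there. In the far-field regime $d(x,y)\gg\sqrt t$, I would split the domain of integration according to whether $z$ lies in $B(x,d(x,y)/2)$ or in its complement, so that on each piece one of the two Gaussian factors carries almost all of the decay $a^{-c\,d^2(x,y)/t}$; careful bookkeeping then produces the polynomial prefactor $1+(d(x,y)/\sqrt t)^N$. The latter, multiplied by $t^{-N/2}\,p_t(x,y)$, gives precisely the claimed bound.

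The main obstacle is this last step: one has to ensure that the polynomial prefactor comes out exactly as $t^{-N/2}+d^N(x,y)\,t^{-N}$ and not worse, while accepting a possibly smaller exponential decay constant (harmless as long as some $c>0$ survives). A conceptually cleaner alternative is to treat the base case $N=1$ via the Li--Yau gradient estimate, available on $M$ thanks to its bounded geometry, and then iterate by writing $\nabla^N_y p_t$ as $N$ nested convolutions with the kernels $\nabla p_{t/N}$ via the semigroup property; but maintaining the structure $(t^{-1/2}+d(x,y)/t)^N$ through the iteration requires comparable combinatorial care.
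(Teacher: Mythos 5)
The paper does not give a self-contained proof of this statement for a general closed manifold: it cites Stroock and Hsu, whose arguments rely on stochastic-analytic techniques (Brownian bridge, Malliavin calculus), and provides an explicit method-of-images computation only for $M=\cc01^2$ in \cref{app:square}. Your proposal is a genuinely different, more PDE-flavored route, but it does not close, and the obstruction is exactly the point you try to wave away at the end: the exponential constant does \emph{not} merely need to "survive," it needs to be at least as large as the decay constant of a Gaussian \emph{lower} bound for $p_t$, and your chain of reductions cannot deliver that.

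Concretely, the interior parabolic estimate forces you to take a supremum of $p_s(z,\cdot)$ over $B(y,\sqrt t)$, which degrades the decay $a^{-d^2(z,y)/t}$ of \cref{thm:heat_estimate} to roughly $a^{-d^2(z,y)/(2t)}$; the subsequent convolution against $p_{t/2}(x,z)\lesssim t^{-1}a^{-2d^2(x,z)/t}$ shrinks it further, since $\int a^{-\alpha_1 d^2(x,z)/t}\,a^{-\alpha_2 d^2(z,y)/t}\,\de\m(z)\sim t\,a^{-\frac{\alpha_1\alpha_2}{\alpha_1+\alpha_2}d^2(x,y)/t}$ and $\alpha_1\alpha_2/(\alpha_1+\alpha_2)<\min(\alpha_1,\alpha_2)$. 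You end up with $\abs{\nabla^N p_t}\lesssim P(d,t)\,t^{-1}a^{-c\,d^2/t}$ for some $c$ strictly smaller than the exponential constant in the Gaussian \emph{upper} bound. To reach the target $\lesssim P(d,t)\,p_t$ you must divide by $p_t$, i.e.\ invoke a Gaussian lower bound $p_t\gtrsim t^{-1}a^{-\beta d^2/t}$. Any consistent lower-bound constant satisfies $\beta\ge\alpha_{\mathrm{upper}}>c$, so the ratio contributes $a^{(\beta-c)d^2/t}$, which blows up exponentially in the far field and cannot be absorbed into a polynomial prefactor. This mismatch is precisely why the theorem is hard, why the cited proofs go through stochastic analysis, and why the paper's own self-contained argument is restricted to the square, where the exact kernel is available. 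Your Li--Yau alternative is the right spirit for $N=1$ — it controls $\abs{\nabla p_t}/p_t$ directly and sidesteps the constant-matching problem — but the proposed iteration is not a convolution of $N$ copies of $\nabla p_{t/N}$: the semigroup identity moves only the last $\nabla_y$ onto the last factor, so promoting $N=1$ to $N\ge 2$ requires a genuinely new mechanism (this is where Hamilton-type higher-order gradient estimates or the probabilistic representations come in).
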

\begin{proof}
  For closed compact manifolds it can be found in \cite{stroock1998} or in \cite[Corollary 1.2]{Hsu99}.
  We prove the special case $M=\cc01^2$ in \cref{app:square}.
\end{proof}
\begin{remark}
  The estimate on the derivatives of the heat kernel provided by the previous theorem is 
  fundamental for the approach presented here. 
  Furthermore, as anticipated in \cref{sub:setting}, the need for such an estimate is 
  exactly the obstruction to the generalization of our result to the weighted or with boundary setting.
  
  Let us also remark that we will use the estimate only for $N\le 3$.
\end{remark}

We are now ready to state and prove some estimates on the kernel $q_t$. 
The first one has an algebraic flavor, whereas \cref{prop:q_deriv,cor:q_four} are 
hard-analysis estimates deduced from \cref{thm:heat_estimate,thm:heat_deriv}.

\begin{proposition}\label{prop:on_diag_int}
  For any $t>0$ it holds
  \begin{equation*}
    \int_M\int_M \abs{\nabla_y q_t(x,y)}^2\de\m(y)\de\m(x) 
    %= \int_{2t}^{\infty}\int_M \left(p_s(x,x)-1\right)\de\m(x)\de s
    = \frac{\abs{\log(t)}}{4\pi} + \bigo(1)\fullstop
  \end{equation*}
\end{proposition}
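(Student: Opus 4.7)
The plan is to reduce the double integral to a one-variable integral involving the on-diagonal heat trace, then invoke the Trace Formula (\cref{thm:heat_ondiag}) together with the spectral decay at infinity.

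First I would integrate by parts in the $y$ variable. Since $-\lapl_y q_t(x,y) = p_t(x,y) - 1$ by definition and everything is smooth for $t>0$, we get
\begin{equation*}
  \int_M \abs{\nabla_y q_t(x,y)}^2 \de\m(y)
  = \int_M q_t(x,y)\bigl(p_t(x,y)-1\bigr)\de\m(y)
  = \int_M q_t(x,y)\,p_t(x,y)\de\m(y),
\end{equation*}
where the last equality uses that $q_t(x,\emptyparam)$ has null mean. Next I would plug in the integral representation \cref{eq:q_expr} and exchange the order of integration:
\begin{equation*}
  \int_M q_t(x,y)p_t(x,y)\de\m(y)
  = \int_t^{\infty}\int_M\bigl(p_s(x,y)-1\bigr)p_t(x,y)\de\m(y)\de s.
\end{equation*}
By the semigroup property together with the symmetry of the heat kernel, $\int_M p_s(x,y)p_t(x,y)\de\m(y) = p_{s+t}(x,x)$, and $\int_M p_t(x,y)\de\m(y)=1$. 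Therefore the inner bracket equals $p_{s+t}(x,x)-1$, and after the change of variables $u=s+t$ we arrive at
\begin{equation*}
  \int_M \abs{\nabla_y q_t(x,y)}^2 \de\m(y) = \int_{2t}^{\infty}\bigl(p_u(x,x)-1\bigr)\de u.
\end{equation*}

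The double integral in the statement then becomes, after one more Fubini,
\begin{equation*}
  \int_{2t}^{\infty}\!\int_M\bigl(p_u(x,x)-1\bigr)\de\m(x)\de u.
\end{equation*}
Here the integrand with respect to $u$ is precisely the quantity controlled by \cref{thm:heat_ondiag}: for $u\in(2t,1]$ it equals $\frac{1}{4\pi u} + O(u^{-1/2})$, and for $u\ge 1$ it is bounded (in fact decays exponentially, since the spectral expansion of the heat kernel shows $\int_M(p_u(x,x)-1)\de\m(x) = \sum_{k\ge 1}e^{-\lambda_k u}$, where $\lambda_1>0$ by compactness). Splitting the integral at $u=1$, the contribution from $u\ge 1$ is $O(1)$, the contribution of the error term $O(u^{-1/2})$ on $(2t,1]$ is again $O(1)$, and the leading term is
\begin{equation*}
  \int_{2t}^{1}\frac{\de u}{4\pi u} = \frac{\abs{\log(2t)}}{4\pi} = \frac{\abs{\log t}}{4\pi} + O(1),
\end{equation*}
which yields the claim.

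The only mildly delicate point is the justification of the $u\ge 1$ tail: the statement of \cref{thm:heat_ondiag} is an asymptotic as $t\to 0$ and does not by itself control large times. This obstacle is trivial to resolve using the spectral decomposition of $p_u$ as above, so the proof reduces essentially to the integration-by-parts identity and the semigroup-based computation of the on-diagonal integral.
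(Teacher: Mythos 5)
Your proof is correct and follows essentially the same route as the paper's: integrate by parts in $y$, apply the representation of $q_t$ as a time integral together with the semigroup property to reduce the inner integral to $\int_{2t}^{\infty}(p_u(x,x)-1)\,\de u$, then integrate in $x$ and invoke the trace formula. You also spell out the justification of the large-time tail via spectral decay, which the paper leaves implicit but which is indeed needed since \cref{thm:heat_ondiag} is only a small-time asymptotic.
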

\begin{proof}
  Let us ignore the integral in $\de\m(x)$.
  Recalling the formula stated in \cref{eq:q_expr} and the definition of $q_t$, integrating by parts we obtain
  \begin{equation*}
    \int_M \abs{\nabla_y q_t(x,y)}^2\de\m(y) 
    = \int_M (p_t(x,y)-1)\int_t^{\infty}(p_s(x,y)-1)\de s\de\m(y) \comma
  \end{equation*}
  thence, applying Fubini's theorem and the semigroup property of $p_t$, we can continue this 
  chain of identities
  \begin{equation*}
    =\int_t^{\infty}\int_M (p_t(x,y)p_s(y, x)-1)\de\m(y)\de s
    =\int_t^{\infty}(p_{s+t}(x,x)-1)\de s
    =\int_{2t}^{\infty}(p_s(x,x)-1)\de s \fullstop
  \end{equation*}
  After integration with respect to $x$, the statement follows thanks to \cref{thm:heat_ondiag}.
\end{proof}

The following proposition plays a central role in all forthcoming results. Indeed, the kind of 
estimate we obtain on the derivatives of $q_t$ is exactly the one we need to deduce strong 
integral inequalities.
\begin{proposition}\label{prop:q_deriv}
  For any $N\ge 1$, $0<t<1$ and $x,y\in M$ it holds
  \begin{equation*}
    \abs{\nabla^N_y q_t(x,y)} \lesssim_N \frac{1}{d^N(x,y)+t^{\frac N2}} \fullstop
  \end{equation*}
\end{proposition}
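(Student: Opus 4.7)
The plan is to differentiate the integral representation \cref{eq:q_expr} under the integral sign,
\begin{equation*}
\nabla^N_y q_t(x,y) = \int_t^\infty \nabla^N_y p_s(x,y)\de s\comma
\end{equation*}
and then to combine the pointwise heat-kernel estimates of \cref{thm:heat_estimate} and \cref{thm:heat_deriv} to control this integral. Writing $d\defeq d(x,y)$, recall that on the compact manifold $M$ the distance $d$ is bounded by a constant and $0<t<1$, so the target quantity $\frac{1}{d^N+t^{N/2}}$ is itself bounded below by a positive constant depending only on $M$. In particular the tail $\int_1^\infty \nabla^N_y p_s(x,y)\de s$, which is bounded by a constant thanks to the exponential convergence of $p_s$ (with all its derivatives) to $1$ as $s\to\infty$, is automatically absorbed in the claimed inequality. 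The real content of the proposition is therefore the estimate of $\int_t^1\nabla^N_y p_s(x,y)\de s$.

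Combining \cref{thm:heat_estimate} and \cref{thm:heat_deriv}, for $0<s<1$ one has
\begin{equation*}
\abs{\nabla^N_y p_s(x,y)}\lesssim_N \left(\frac{1}{s^{N/2+1}}+\frac{d^N}{s^{N+1}}\right) a^{-d^2/s}\fullstop
\end{equation*}
I would then split into two regimes depending on whether $d^2\le t$ or $d^2\ge t$, the bound $\frac{1}{d^N+t^{N/2}}$ being comparable to $t^{-N/2}$ in the first case and to $d^{-N}$ in the second.

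When $d^2\le t$ the Gaussian factor is unnecessary: integrating directly gives $\int_t^1 s^{-N/2-1}\de s \lesssim_N t^{-N/2}$ and $\int_t^1 d^N s^{-N-1}\de s \lesssim_N d^N t^{-N}\le t^{-N/2}$, where the last step uses $d^2\le t$. Both contributions are then controlled by $t^{-N/2}\lesssim 1/(d^N+t^{N/2})$, as required. When $d^2\ge t$ the Gaussian factor becomes essential: I would split the integral at $s=d^2$ (the case $d^2>1$ is even easier, since then the Gaussian factor alone produces exponential decay in $d^2$); on the upper piece $[d^2,1]$ the polynomial estimate by itself already gives $d^{-N}$, while on the lower piece $[t,d^2]$ the substitution $u=d^2/s$ transforms each of the two contributions into $d^{-N}$ times an integral of the form $\int_1^{d^2/t} u^\alpha a^{-u}\de u$, with $\alpha=N/2-1$ and $\alpha=N-1$ respectively, each uniformly bounded by a constant depending only on $N$. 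The one delicate point is precisely this balancing of the Gaussian weight against the inverse-power prefactor in the short-time regime; once the change of variable is performed the estimate is immediate.
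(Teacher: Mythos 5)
Your proof is correct and follows essentially the same route as the paper: differentiate under the integral in \cref{eq:q_expr}, combine \cref{thm:heat_estimate} and \cref{thm:heat_deriv} into $\abs{\nabla^N p_s}\lesssim_N (s^{-N/2-1}+d^N s^{-N-1})a^{-d^2/s}$, absorb the far tail using compactness (so $1/(d^N+t^{N/2})\gtrsim 1$), and perform the change of variables $u=d^2/s$. The only cosmetic difference is that you carry out the dichotomy $d^2\le t$ versus $d^2\ge t$ directly at the level of the integral, whereas the paper packages the same dichotomy into the single elementary inequality $\int_0^x(w^{N/2-1}+w^{N-1})a^{-w}\de w\lesssim_N x^{N/2}/(1+x^{N/2})$ (which it also proves by splitting at $x=1$, i.e.\ at $d^2=t$). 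Both versions are sound and of comparable length.
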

\begin{proof}
  For the sake of brevity, let us denote $d=d(x,y)$.

  Applying \cref{eq:q_expr} together with \cref{thm:heat_estimate,thm:heat_deriv}, through some 
  careful estimates of the involved quantities and the change of variables $d^2/s=w$, we obtain
  \begin{align*}
    \abs{\nabla^N_y q_t(x,y)}
    &\le \int_t^{\infty} \abs{\nabla^N p_s(x,y)}\de s
    \lesssim_N 1 + \int_t^1\left(\frac{1}{s^{\frac{N}2}} 
    + \left(\frac ds\right)^N\right)\frac 1s\, a^{-\frac{d^2}s}\de s \\
    &\le 1 + \int_t^\infty\left(\frac{1}{s^{\frac{N}2}} 
    + \left(\frac ds\right)^N\right)\frac 1s\, a^{-\frac{d^2}s}\de s \\
    &= 1 + \frac 1{d^N}\int_0^{\frac{d^2}t}\left(w^{\frac N2-1}+w^{N-1}\right)a^{-w}\de w \\
    &= 1 + \frac 1{t^{\frac N2}}\cdot
    \frac{\int_0^{\frac{d^2}t}\left(w^{\frac N2-1}+w^{N-1}\right)a^{-w}\de w}
    {\left(\frac{d^2}t\right)^{\frac N2}} \fullstop
  \end{align*}
  The desired statement now follows from the elementary inequality
  \begin{equation*}
      \forall\, x > 0:\quad \int_0^x (w^{\frac N2-1} + w^{N-1})a^{-w}\de w \lesssim_N \frac{x^{\frac N2}}{1+x^{\frac N2}} \comma
  \end{equation*}
  which is a consequence of the two estimates
  \begin{align*}
    \forall\, 0 < x < 1:\quad &\int_0^x (w^{\frac N2-1} + w^{N-1})a^{-w}\de w \lesssim_N x^{\frac N2} \comma\\
    \forall\, x \ge 1:\quad &\int_0^x (w^{\frac N2-1} + w^{N-1})a^{-w}\de w \lesssim_N 1 \fullstop
  \end{align*}
\end{proof}

\begin{corollary}\label{cor:q_four}
  Let us fix a natural number $N\ge 1$ and a real number $p > \frac2N$. 
  For any $0<t<1$ and $\bar x,\bar y\in M$ the following two inequalities hold\footnote{Let us remark that the
  two inequalities are not equivalent. Indeed in the first one we are integrating with respect to the variable $y$, that
  is the differentiation variable, whereas in the second one we are integrating with respect to the variable $x$, that 
  \emph{is not} the differentiation variable.}
  \begin{align*}
    &\int_M\abs{\nabla^N_y q_t(\bar x,y)}^p\de\m(y) \lesssim_{N,p} t^{1-\frac{Np}2}\comma \\
    &\int_M\abs{\nabla^N_y q_t(x,\bar y)}^p\de\m(x) \lesssim_{N,p} t^{1-\frac{Np}2}\fullstop
  \end{align*}
  When $p=\frac2N$, the same inequalities hold if $t^{1-\frac{Np}2}$ is replaced with $\abs{\log(t)}$.
\end{corollary}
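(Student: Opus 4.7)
The heart of the argument is the pointwise bound of \cref{prop:q_deriv}, which depends on $x,y$ only through $d(x,y)$. The two inequalities will therefore reduce to the same one-variable integral once I exploit the symmetry of $q_t$. First I would rewrite the denominator using the elementary comparison
\begin{equation*}
  d^N(x,y) + t^{N/2} \asymp_N \bigl(d(x,y) + t^{1/2}\bigr)^N,
\end{equation*}
so that \cref{prop:q_deriv} yields $|\nabla^N_y q_t(x,y)| \lesssim_N (d(x,y)+t^{1/2})^{-N}$. For the second inequality, the symmetry $q_t(x,y)=q_t(y,x)$ transfers derivatives from the second slot to the first slot, after which the same pointwise bound applies with the roles of $x$ and $y$ swapped; this reduces both integrals in the statement to
\begin{equation*}
  I(\bar z,t) \defeq \int_M \frac{1}{(d(z,\bar z) + t^{1/2})^{Np}}\, d\m(z).
\end{equation*}

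To estimate $I(\bar z,t)$ I would use that $M$ is a compact $2$-dimensional Riemannian manifold: a finite cover by normal charts together with compactness gives, for any nonnegative radial function $F$,
\begin{equation*}
  \int_M F(d(\bar z,z))\, d\m(z) \lesssim \int_0^{D} F(r)\, r\, dr,
\end{equation*}
where $D=\diam(M)$. Thus $I(\bar z,t) \lesssim \int_0^D \frac{r}{(r+t^{1/2})^{Np}}\, dr$, and the rescaling $r = t^{1/2} s$ yields
\begin{equation*}
  I(\bar z,t) \lesssim t^{1-\frac{Np}{2}} \int_0^{D/t^{1/2}} \frac{s}{(s+1)^{Np}}\, ds.
\end{equation*}

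Finally I would read off the two regimes from this last integral. When $Np > 2$, i.e.\ $p > 2/N$, the remaining integral converges at infinity uniformly in $t$, giving the claimed bound $t^{1-Np/2}$. When $Np = 2$, i.e.\ $p = 2/N$, the prefactor equals $1$ and the integral $\int_0^{D/t^{1/2}} \frac{s}{(s+1)^2}\, ds$ behaves like $\log(1 + D/t^{1/2}) \asymp |\log t|$ for $t \in (0,1)$, matching the logarithmic replacement in the statement.

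No serious obstacle is expected: \cref{prop:q_deriv} does all the analytic work and the remainder is a standard radial integration. The only point deserving any attention is that the polar change of variables holds with a constant independent of $\bar z$, which is immediate from compactness of $M$ and boundedness of the Jacobian of the exponential map in normal coordinates.
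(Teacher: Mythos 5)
Your proof is correct and follows the paper's approach: apply the pointwise bound of \cref{prop:q_deriv}, reduce to a one-dimensional radial integral (the paper uses the coarea formula and explicitly notes your Cavalieri-style ball estimate as an equivalent alternative), and rescale by $r\mapsto t^{1/2}s$, with the borderline case $Np=2$ producing the $\abs{\log t}$ factor. One minor clarification: the second inequality does not require invoking the symmetry $q_t(x,y)=q_t(y,x)$ --- \cref{prop:q_deriv} already bounds $\abs{\nabla^N_y q_t(x,\bar y)}$ by a function of $d(x,\bar y)$ alone, which is automatically symmetric in its arguments, so integrating in $x$ directly yields the same radial integral; the footnote's point is that the two statements are genuinely distinct, but the pointwise bound controlling both happens to be symmetric.
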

\begin{proof} 
  We are going to prove only the case $Np>2$ when the integral is in $\de\m(y)$, the proof of
  the other case being very similar.

  To prove the desired result we just have to insert the inequality stated in \cref{prop:q_deriv} 
  inside the coarea formula (in the very last inequality below we use the change of variables
  $r^2=st$):
  \begin{align*}
    \int_M\abs{\nabla^N_y q_t(x,y)}^p\de\m(y) 
    &\lesssim_{N,p} \int_M\frac{1}{(d^N(x,y)+t^{N/2})^p}\de\m(y) \\
    &\lesssim_{N,p} \int_{B(x,\injradius(M)/2)} \frac{1}{(d^N(x,y)+t^{N/2})^p}\de\m(y) \\
    &= \int_0^{\injradius(M)/2}\frac{1}{(r^N+t^{N/2})^p}\Haus^1(\partial B(x, r))\de r \\
    &\lesssim \int_0^{\injradius(M)/2}\frac{r}{(r^N+t^{N/2})^p}\de r
    \lesssim_{N,p} t^{1-\frac{Np}2} \fullstop
  \end{align*}
  Let us recall that $\Haus^1$ is the Hausdorff measure induced by the Riemannian distance. 
  In our inequalities we have implicitly applied the known estimate (see \cite{Petersen98}) 
  on the measure of the spheres on a smooth Riemannian manifold. It would be possible to obtain 
  the same result applying only the estimate on the measure of the balls (that is a somewhat 
  more elementary inequality) and Cavalieri's principle instead of the coarea formula.
\end{proof}

We are going to transform the results we have proven about $q_t$ into inequalities concerning $f^{n,t}$.

As anticipated, it holds
\begin{equation}\label{eq:f_as_average}
  f^{n,t}(y) = \frac1n\sum_{i=1}^n q_t(X_i, y)\fullstop
\end{equation}
Such an identity can be proven showing that both sides have null mean and the same Laplacian:
\begin{align*}
  -\lapl f^{n,t}(y) = u^{n,t}(y)-1 &= P^*_t\left(\frac1n \sum_{i=1}^n \delta_{X_i}-1\right)(y) 
  = \frac1n\sum_{i=1}^n p_t(X_i,y)-1 \\
  &= -\lapl\left(\frac1n\sum_{i=1}^n q_t(X_i, \emptyparam)\right)(y) \fullstop
\end{align*}

\begin{lemma}\label{lem:int_nabla_f}
  The following approximation holds
  \begin{equation*}
    \E{\int_M\abs{\nabla f^{n,t}}^2\de\m} 
    = \frac{\abs{\log(t)}}{4\pi n} + \bigo\left(\frac{1}n\right) \fullstop
  \end{equation*}
\end{lemma}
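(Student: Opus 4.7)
The plan is to compute the expectation by expanding $\nabla f^{n,t}$ as a sum of i.i.d.\ contributions and then to exploit the independence together with the two key facts about $q_t$ already established in the excerpt: that $\nabla_y q_t(\emptyparam, y)$ has zero $\m$-average, and the identity from \cref{prop:on_diag_int}.

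First I would start from \cref{eq:f_as_average} and differentiate in $y$ to get
\begin{equation*}
  \nabla f^{n,t}(y) = \frac1n\sum_{i=1}^n \nabla_y q_t(X_i, y).
\end{equation*}
Squaring pointwise and expanding yields
\begin{equation*}
  |\nabla f^{n,t}(y)|^2 = \frac1{n^2}\sum_{i,j=1}^n \nabla_y q_t(X_i, y)\cdot \nabla_y q_t(X_j, y).
\end{equation*}
Then I would integrate over $y$ and take expectations, exchanging sum, integral and expectation (which is legitimate since everything is nonnegative and the $q_t$-estimates of \cref{prop:q_deriv} and \cref{cor:q_four} give integrability for $t>0$ fixed).

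Next, using that the $X_i$ are i.i.d.\ with law $\m$, I would split the double sum into the diagonal part ($i=j$, giving $n$ identical terms) and the off-diagonal part ($i\neq j$). For $i\neq j$, independence gives
\begin{equation*}
  \E{\nabla_y q_t(X_i,y)\cdot \nabla_y q_t(X_j,y)} = \left|\int_M \nabla_y q_t(x,y)\de\m(x)\right|^2 = 0,
\end{equation*}
since the average of $\nabla_y q_t(\emptyparam,y)$ vanishes, as shown in the excerpt. So only the diagonal survives, and it gives
\begin{equation*}
  \E{\int_M |\nabla f^{n,t}|^2 \de\m} = \frac1n\int_M\int_M |\nabla_y q_t(x,y)|^2\de\m(x)\de\m(y).
\end{equation*}

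Finally I would invoke \cref{prop:on_diag_int} to identify the double integral with $\frac{|\log(t)|}{4\pi} + O(1)$, which upon dividing by $n$ produces the claimed asymptotics. The whole argument is essentially a one-line variance computation; there is no real obstacle, as all the analytic work was already done in proving \cref{prop:on_diag_int} and in the remark that $\int_M \nabla_y q_t(x,y)\de\m(x)=0$.
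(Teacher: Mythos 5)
Your proof is correct and is essentially the paper's own argument (the paper compresses the diagonal/off-diagonal splitting into the phrase ``linearity of the expected value and the independence of the variables $X_i$'' and then cites \cref{prop:on_diag_int}, exactly as you do). The only minor imprecision is the justification for exchanging sum, integral and expectation: the off-diagonal summands $\nabla_y q_t(X_i,y)\cdot\nabla_y q_t(X_j,y)$ are not nonnegative, so the correct justification is absolute integrability via \cref{cor:q_four} (or Tonelli applied to $|\nabla f^{n,t}|^2$ before expanding), but this does not affect the validity of the argument.
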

\begin{proof}
  Using the linearity of the expected value and the independence of the variables $X_i$, we obtain
  \begin{equation*}
    \E{\int_M\abs{\nabla f^{n,t}}^2\de\m(y)} =\frac1n\int_M\int_M\abs{\nabla_y q_t}^2\de\m(y)\de\m(x)
  \end{equation*}
  and therefore, applying \cref{prop:on_diag_int}, we have proven the desired approximation.
\end{proof}
\begin{remark}
  The previous lemma is contained in \cite[Lemma 3.16]{Ambrosio-Stra-Trevisan2018}.
\end{remark}

Our next goal is proving \cref{thm:prob_flat_density}. 
In order to do so we will need Bernstein inequality for the sum of independent and identically 
distributed random variables. 
We are going to state it here exactly in the form we will use. 
One of the first reference containing the inequality is \cite{Bernstein46}, whereas one of the
first \emph{in English} is \cite{Hoeffding63}. 
A more recent exposition can be found in the survey \cite[Theorem 3.6,3.7]{ChungLu2006} (alternatively, see the monograph \cite{boucheron2003}).
\begin{theorem}[Bernstein Inequality]\label{thm:bernstein}
  Let $X$ be a centered random variable such that, for a certain constant $L>0$, 
  it holds $\abs{X}\leq L$ almost surely.
  If $(X_i)_{1\le i\le n}$ are independent random variables with the same distribution as $X$, 
  for any $\xi>0$ it holds
  \begin{equation*}
    \P{\abs*{\frac{\sum_{i=1}^n X_i}{n}}>\xi}
    \le 2\exp\left(-\frac{n\xi^2}{2\E{X^2}+\frac23L\xi}\right) \fullstop
  \end{equation*}
\end{theorem}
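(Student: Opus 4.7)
The plan is to apply the Bernstein concentration inequality pointwise to the scalar random variables obtained by testing the random Hessian $\nabla^2 f^{n,t}(y)$ against pairs of unit tangent vectors, and then to promote this pointwise concentration to a uniform-in-$y$ bound via a net argument whose cardinality is controlled by the deterministic estimate on $\nabla^3 q_t$. Starting from the formula $f^{n,t}(y)=\frac1n\sum_{i=1}^n q_t(X_i,y)$, one differentiates twice in $y$ to write
\begin{equation*}
  \nabla^2 f^{n,t}(y) = \frac1n\sum_{i=1}^n \nabla^2_y q_t(X_i,y),
\end{equation*}
and the summands are centered because $q_t$ has zero mean in $x$ (by symmetry of $q_t$ and its defining property), so the same holds after differentiating in $y$.

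Fix $y\in M$ and a pair of unit vectors $u,v\in T_yM$. The scalar $Z_i\defeq \nabla^2_y q_t(X_i,y)[u,v]$ is centered, bounded almost surely by $\abs{Z_i}\le\abs{\nabla^2_y q_t(X_i,y)}\lesssim t^{-1}$ thanks to \cref{prop:q_deriv} with $N=2$, and its second moment is controlled by $\cref{cor:q_four}$ with $N=p=2$:
\begin{equation*}
  \E{Z_i^2}\le \int_M \abs{\nabla^2_y q_t(x,y)}^2\de\m(x)\lesssim t^{-1}.
\end{equation*}
Plugging these into \cref{thm:bernstein} (and using $\xi<1$ so that the linear term $L\xi$ is absorbed by $\E X^2$) yields a bound of the form $\P(\abs{\nabla^2 f^{n,t}(y)[u,v]}>\xi)\lesssim \exp(-c\, n t \xi^2)$ for a constant $c=c(M)>0$, which is the right order for the statement.

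To upgrade this pointwise bound to one on $\norm{\nabla^2 f^{n,t}}_\infty$, observe that \cref{prop:q_deriv} with $N=3$ gives the \emph{deterministic} estimate $\norm{\nabla^3 f^{n,t}}_\infty\lesssim t^{-3/2}$, hence (via the remark following \cref{def:tensor_infinity_norm}) $y\mapsto \abs{\nabla^2 f^{n,t}(y)}$ is Lipschitz with constant $\lesssim t^{-3/2}$. I choose a spatial $\delta$-net in $M$ with $\delta \sim \xi\, t^{3/2}$, of cardinality $\lesssim 1/(\xi^2 t^3)$, and, at each net point, a $\tfrac14$-net of the unit tangent sphere pairs $S_{y_j}M\times S_{y_j}M$ of $O(1)$ size. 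A standard convexity argument (if all tested net values are at most $\xi/4$, then the operator norm at $y_j$ is at most $\xi/2$ because the missing mass is multiplied by the operator norm itself times $\le \tfrac12$) combined with the Lipschitz control in $y$ shows that the event $(A^{n,t}_\xi)^\complement$ is contained in the union, over the $\lesssim 1/(\xi^2 t^3)$ net points, of the bad events whose probabilities are controlled pointwise. A union bound then gives exactly
\begin{equation*}
  \P{(A^{n,t}_\xi)^\complement} \lesssim \frac1{\xi^2 t^3}\exp(-c\, n t \xi^2) = \frac1{\xi^2 t^3}\,a^{-n t\xi^2}
\end{equation*}
with $a=e^c$.

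The main obstacle is the interplay between the pointwise Bernstein exponent and the net cardinality: one must ensure that the third-derivative blow-up rate $t^{-3/2}$ gives precisely the prefactor $t^{-3}$ in the statement, and that the tangent-sphere discretisation can be done at a size independent of $n,t,\xi$ (using the self-absorbing inequality for the operator norm). The other mild technical point is making the integration-under-the-sign-of-derivative step that guarantees $\E{\nabla^2_y q_t(X_i,y)}=0$, which follows from the smoothness of $q_t$ and the uniform estimates of \cref{prop:q_deriv}.
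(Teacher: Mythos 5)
Your proposal does not prove the statement in question. The statement is Bernstein's inequality itself: a concentration bound for the empirical mean of i.i.d.\ centered, bounded scalar random variables, with the explicit sub-gamma exponent $n\xi^2/(2\E{X^2}+\tfrac23 L\xi)$. What you have written is instead a proof sketch of \cref{thm:prob_flat_density}, i.e.\ the \emph{application} of Bernstein's inequality to the random Hessian $\nabla^2 f^{n,t}$ (pointwise concentration via testing against unit tangent vectors, a net argument calibrated by the $\nabla^3 q_t$ bound, and a union bound). That argument presupposes \cref{thm:bernstein} as a black box, so it cannot serve as its proof; the two statements have been confused. For the record, the paper itself does not prove Bernstein's inequality either --- it cites the classical literature --- so the benchmark here is the standard argument, which you do not supply.

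A correct proof would run through the exponential moment (Chernoff) method: for $\lambda>0$ with $\lambda L<3$, the elementary bound $e^{\lambda x}\le 1+\lambda x+\frac{\lambda^2x^2}{2(1-\lambda L/3)}$ for $\abs{x}\le L$ (obtained by comparing the Taylor series of $e^{\lambda x}$ term by term with a geometric series, using $\E{X^k}\le L^{k-2}\E{X^2}$) gives
\begin{equation*}
  \E{e^{\lambda X}} \le \exp\left(\frac{\lambda^2\E{X^2}}{2(1-\lambda L/3)}\right),
\end{equation*}
hence by independence and Markov's inequality
\begin{equation*}
  \P{\sum_{i=1}^n X_i > n\xi} \le \exp\left(-\lambda n\xi+\frac{n\lambda^2\E{X^2}}{2(1-\lambda L/3)}\right).
\end{equation*}
Choosing $\lambda=\xi/(\E{X^2}+L\xi/3)$ yields the one-sided bound $\exp\bigl(-n\xi^2/(2\E{X^2}+\tfrac23L\xi)\bigr)$, and applying the same to $-X$ and summing gives the factor $2$. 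None of these steps appear in your proposal, so as a proof of \cref{thm:bernstein} it has to be regarded as missing in its entirety, even though the material you did write is essentially the correct strategy for the theorem where Bernstein is eventually used.
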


\begin{proof}[Proof of \cref{thm:prob_flat_density}]
  Our strategy is to gain, for any $y\in M$, a pointwise bound on the probability 
  $\P{\abs{\nabla^2f^{n,t}(y)}>\xi}$ through the aforementioned concentration inequality and then to achieve 
  the full result using a sufficiently fine net of points on $M$.

  Let us fix $y\in M$. Recalling \cref{eq:f_as_average}, we can apply \cref{thm:bernstein} 
  in conjunction with \cref{prop:q_deriv,cor:q_four} and obtain\footnote{Note that we are applying \cref{thm:bernstein} to matrix-valued random variables. In order to prove this generalization it is sufficient to apply the theorem to each entry of the matrix.}
  \begin{align*}
    \P{\abs{\nabla^2 f^{n,t}(y)}>\xi}
    &\lesssim \exp\left(-\frac{n\xi^2}{
    2\norm{\nabla^2 q_t(\emptyparam, y)}^2_2+\frac23\norm{\nabla^2 q_t(\emptyparam, y)}_\infty\xi
    }\right) \\
    &\lesssim \exp\left(-nCt\xi^2\right)\comma
  \end{align*}
  where $C=C(M)$ is a suitable constant.

  Let us now fix $\ell>0$ such that 
  $\ell\cdot L=\frac\xi2$, where $L$ is the Lipschitz constant of $\abs{\nabla^2 f}$. 
  Let $(z_i)_{1\le i\le N(\ell)}\subseteq M$ be 
  an $\ell$-net on the manifold. 
  Through a fairly standard approach, we can find such a net with $N(\ell)\lesssim \ell^{-2}$.
  Furthermore, it is easy to see, considering the bound imposed on $\ell$, that if 
  $\abs{\nabla^2 f}(z_i)<\xi/2$ for any $1\le i\le N(\ell)$, then $\norm{\nabla^2 f}_\infty<\xi$.
  Therefore we have
  \begin{align*}
    \P{\norm{\nabla^2 f^{n,t}}_\infty>\xi}
    &\le \sum_{i=1}^{N(\ell)}\P{\abs{\nabla^2 f^{n,t}}(z_i)>\xi/2}\\
    &\lesssim \frac1{\ell^2} \exp\left(-\frac{nCt\xi^2}4\right) 
    = \frac{4L^2}{\xi^2}\exp\left(-\frac{nCt\xi^2}{4}\right)\fullstop
  \end{align*}
  The statement follows noticing that, thanks to \cref{eq:f_as_average}, $L$ can be bounded 
  from above by\footnote{The definition of the infinity norm for a $3$-tensor 
  is analogous to the definition given for $2$-tensors in \cref{def:tensor_infinity_norm}.} 
  \begin{equation*}
    \max_{x\in M}\norm{\nabla^3 q_t(x,\cdot)}_\infty\lesssim t^{-\frac32} \comma
  \end{equation*}
  where we used \cref{prop:q_deriv} in the last inequality.
\end{proof}
\begin{remark}\label{rem:complement_event_is_negligible}
  Let us stress that if $t\gg\frac{\log n}{n}$, 
  \cref{thm:prob_flat_density} shows that \emph{quite often} the hessian of $f^{n,t}$ is \emph{very small}.

  More precisely, if $\xi_n\ge\frac 1n$ and $t_n = \log(a)^{-1}\kappa_n \frac{\log(n)}{n}$ with $\kappa_n\ge 1$, then
  \begin{equation*}
    \P{(A^{n,t_n}_{\xi_n})^{\complement}} \lesssim \frac{1}{n^{\kappa_n\xi_n^2-5}} \fullstop
  \end{equation*}
  This observation will play a major role in the last sections as it will
  allow us to ignore completely the complementary event $A^{n,t}_\xi$ as it is sufficiently small.
\end{remark}

\section{Transport Cost Inequality}\label{sec:transport_ineq}
Given two density functions $u_0,u_1:M\to\R$, the Dacorogna-Moser coupling (see \cite[16-17]{Villani09}) gives
us a ``first-order'' approximation of the optimal matching between them.
With this technique in mind, following the ideas developed in \cite{Ambrosio-Stra-Trevisan2018,Ledoux2017}, 
we are presenting a proposition that generalizes some of the results mentioned in those two papers.
Its proof is simpler than those presented in \cite{Ambrosio-Stra-Trevisan2018,Ledoux2017} 
but relies on the Benamou-Brenier formula (see \cite{Benamou-Brenier00}). 
Let us remark that our application of the Benamou-Brenier formula is somewhat an overkill, 
nonetheless we believe that using it makes the result much more natural.

For the ease of the reader let us recall the said formula before stating the proposition and its proof.
\begin{definition}[Flow plan]
  Given two measures $\mu_0,\mu_1\in\prob(M)$, a flow plan is the joint information of a weakly 
  continuous curve of measures $\mu_t\in C^0(\cc01, \prob(M))$ and a time-dependent Borel 
  vector field $(v_t)_{t\in \oo01}$ on $M$ such that, in $\oo01\times M$, 
  the continuity equation holds in the  distributional sense
  \begin{equation*}
    \frac{\de}{\de t}\mu_t+\div(v_t\mu_t)=0\fullstop
  \end{equation*}
\end{definition}

\begin{theorem}[Benamou-Brenier formula]
  Given two measures $\mu_0,\mu_1\in\prob(M)$, it holds
  \begin{equation*}
    W_2^2(\mu_0, \mu_1) =
    \inf\left\{\int_0^1\int_M \abs{v_t}^2\de\mu_t \de t\st (\mu_t, v_t)\text{ is a flow plan 
    between $\mu_0$ and $\mu_1$}\right\}\fullstop
  \end{equation*}
  In addition, if $(\mu_t, v_t)$ is a flow plan with $\norm{v_t}_\infty+\norm{\nabla v_t}_\infty\in L^\infty(0,1)$, the flow at time $1$ induced by the vector 
  field $v_t$ is a transport map between $\mu_0$ and $\mu_1$ and its cost can be estimated by
  \begin{equation*}
    \int_0^1\int_M \abs{v_t}^2\de\mu_t \de t\fullstop
  \end{equation*}
\end{theorem}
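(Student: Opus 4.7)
The plan is to prove the two inequalities separately and then address the additional smoothness statement.

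For the easier inequality $W_2^2(\mu_0,\mu_1)\ge\inf(\cdots)$ is actually the wrong direction; what we first want is $W_2^2(\mu_0,\mu_1)\le\inf(\cdots)$. To produce a cheap competitor, I would start from an optimal Wasserstein coupling (or, on a Riemannian manifold, an optimal transport plan supported on the graph of a map $T$ given by Brenier--McCann). The displacement interpolation $\mu_t$ obtained by pushing $\mu_0$ along the family of constant-speed geodesics joining each $x$ to $T(x)$ is an absolutely continuous curve in $(\prob(M),W_2)$, and a standard computation shows that it solves the continuity equation with velocity field $v_t$ equal to the geodesic velocity, whose $L^2(\mu_t)$-norm is constant and equal to $W_2(\mu_0,\mu_1)$. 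Integrating in $t$ yields the desired competitor whose action equals $W_2^2(\mu_0,\mu_1)$.

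The reverse inequality is the substantive one. Given any flow plan $(\mu_t,v_t)$, I would show the metric-derivative bound
\begin{equation*}
  W_2^2(\mu_s,\mu_t)\le (t-s)\int_s^t\int_M|v_r|^2\de\mu_r\de r\qquad(0\le s<t\le 1).
\end{equation*}
Assuming enough regularity, this comes from coupling $\mu_s$ and $\mu_t$ through the characteristic flow $X_{s,t}$ of $v_r$: the map $X_{s,t}$ pushes $\mu_s$ onto $\mu_t$, and $d(x,X_{s,t}(x))\le\int_s^t|v_r(X_{s,r}(x))|\de r$, so Cauchy--Schwarz plus the identity $(X_{s,r})_\#\mu_s=\mu_r$ gives the claimed estimate. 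The general case (merely Borel $v_t$) is recovered by a standard mollification of the vector field in space and time, using lower semicontinuity of the action under narrow convergence of $\mu_t$ and weak convergence of $v_t\mu_t$. Specialising to $s=0$, $t=1$ directly yields $W_2^2(\mu_0,\mu_1)\le\int_0^1\!\int_M|v_t|^2\de\mu_t\de t$.

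The ``in addition'' part is then almost a byproduct. Under the assumption $\norm{v_t}_\infty+\norm{\nabla v_t}_\infty\in L^\infty(0,1)$, Cauchy--Lipschitz provides a classical flow $X_t:M\to M$; the push-forward $(X_t)_\#\mu_0$ solves the same continuity equation as $\mu_t$ with the same initial datum, and by uniqueness under these regularity assumptions they coincide. Hence $X_1$ is an admissible transport map and its quadratic cost $\int_M d^2(x,X_1(x))\de\mu_0(x)$ is bounded, via the estimate $d(x,X_1(x))\le\int_0^1|v_t(X_t(x))|\de t$ combined with Cauchy--Schwarz and $(X_t)_\#\mu_0=\mu_t$, by $\int_0^1\!\int_M|v_t|^2\de\mu_t\de t$.

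The main obstacle is the lower-semicontinuity step in the $\ge$ inequality when $v_t$ has no a priori regularity: one has to justify that mollification does not increase the action and that the mollified flow plans still interpolate between $\mu_0$ and $\mu_1$ in the limit. On a compact Riemannian manifold this is standard (one works in charts with a partition of unity, or uses the heat-flow regularisation), but it is the only step where care is genuinely required; everything else reduces to the Cauchy--Lipschitz theorem and uniqueness for the linear continuity equation with Lipschitz coefficients.
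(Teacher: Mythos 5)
The paper does not prove this theorem; it is recalled without proof with a citation to \cite{Benamou-Brenier00}, so there is no internal argument to compare against. Your sketch is a correct, essentially standard proof in the Ambrosio--Gigli--Savar\'e metric style: displacement interpolation supplies a flow plan whose action equals $W_2^2(\mu_0,\mu_1)$ and hence bounds the infimum from above; the metric-derivative estimate $W_2^2(\mu_s,\mu_t)\le(t-s)\int_s^t\int_M\abs{v_r}^2\,\de\mu_r\,\de r$, obtained via the characteristic flow and then extended to merely Borel velocity fields by mollification and lower semicontinuity of the action, bounds the infimum from below; and under the stated Lipschitz control the Cauchy--Lipschitz flow $X_1$ is identified with the endpoint of the interpolating curve by uniqueness for the continuity equation, yielding the cost bound. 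One sentence in your write-up is garbled: you state that ``the easier inequality $W_2^2(\mu_0,\mu_1)\ge\inf(\cdots)$ is actually the wrong direction'' and that you first want $W_2^2(\mu_0,\mu_1)\le\inf(\cdots)$, but what you then actually do --- exhibiting a competitor of action $W_2^2(\mu_0,\mu_1)$ --- proves exactly $W_2^2(\mu_0,\mu_1)\ge\inf(\cdots)$; the remainder of your argument is consistent with this reading, so this is a slip of the pen rather than a logical gap.
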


\begin{proposition}\label{prop:transport_bound}
  Given two positive, smooth density functions $u_0, u_1$ in $M$, let $f\in C^\infty(M)$ be the unique solution 
  of $-\lapl f = u_1-u_0$ with null mean.
  For any increasing function $\theta\in C^1(\cc01)$ such that $\theta(0)=0$ and $\theta(1)=1$, it holds
  \begin{equation*}
    W_2^2(u_0\m, u_1\m)
    \le \int_M\abs{\nabla f}^2
    \left(\int_0^1\frac{\theta'(t)^2}{u_0(1-\theta(t))+u_1\theta(t)}\de t\right)
    \de\m \fullstop
  \end{equation*}
  Furthermore a map that realizes the cost at the right hand side is the flow at time $1$ induced 
  by the time-dependent vector field
  \begin{equation*}
    v_t(x) = \frac{\theta'(t)\nabla f(x)}{u_0(1-\theta(t))+u_1\theta(t)} \fullstop
  \end{equation*}
\end{proposition}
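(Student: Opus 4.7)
The plan is a direct application of the Benamou--Brenier formula to a cleverly chosen flow plan built from the given data $u_0, u_1, f, \theta$. The guessed vector field in the statement tells me what the plan should be, so the task reduces to (i) writing down an interpolating curve of densities, (ii) checking that the proposed $v_t$ satisfies the continuity equation, and (iii) computing the kinetic energy and applying Fubini.

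First, I would define the interpolation $u(t,x) \defeq (1-\theta(t))u_0(x) + \theta(t) u_1(x)$ and set $\mu_t \defeq u(t,\cdot)\m$. Since $\theta(0)=0$, $\theta(1)=1$, this is a weakly continuous curve from $u_0\m$ to $u_1\m$. Differentiating in time gives $\partial_t u(t,x) = \theta'(t)(u_1(x)-u_0(x)) = -\theta'(t)\lapl f(x)$, where the last identity uses the defining equation for $f$. Now I would set
\begin{equation*}
  v_t(x) \defeq \frac{\theta'(t)\nabla f(x)}{u(t,x)}\comma
\end{equation*}
which is well defined and smooth because $u_0,u_1$ are strictly positive on the compact manifold $M$, so $u(t,x)$ is bounded away from $0$ uniformly in $t$. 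A direct computation yields
\begin{equation*}
  \div(u\, v_t) = \theta'(t)\div(\nabla f) = \theta'(t)\lapl f = -\partial_t u\comma
\end{equation*}
so $(\mu_t, v_t)$ is a flow plan between $u_0\m$ and $u_1\m$.

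Next I would plug this flow plan into the Benamou--Brenier formula:
\begin{equation*}
  W_2^2(u_0\m, u_1\m) \le \int_0^1\int_M |v_t|^2\, u\, d\m\, dt
  = \int_0^1\int_M \frac{\theta'(t)^2\,|\nabla f|^2}{u_0(1-\theta(t))+u_1\theta(t)}\,d\m\, dt\fullstop
\end{equation*}
Applying Fubini to move the time integral inside produces exactly the right-hand side of the stated inequality. For the second assertion, the smoothness and positivity of $u_0, u_1$, together with the smoothness of $f$, imply that $\norm{v_t}_\infty + \norm{\nabla v_t}_\infty$ is uniformly bounded in $t\in(0,1)$, so by the quoted second part of the Benamou--Brenier theorem the flow at time $1$ of $v_t$ is a legitimate transport map from $u_0\m$ to $u_1\m$ whose cost is bounded by the same quantity.

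There is no real obstacle: the only thing one has to worry about is the regularity needed to invoke the ``flow map'' part of Benamou--Brenier, and this is automatic here because $u_0,u_1 \in C^\infty(M)$ are strictly positive on the compact $M$, so the denominator in $v_t$ stays uniformly bounded away from zero and $v_t$ inherits the smoothness of $\nabla f$. The role of the free parameter $\theta$ is purely to optimize the one-dimensional integral in $t$; the construction of the flow plan itself does not depend on any particular choice of $\theta$ beyond the monotonicity and boundary conditions.
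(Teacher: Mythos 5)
Your proof is correct and follows essentially the same route as the paper: both define the convex interpolation $u_t = (1-\theta(t))u_0 + \theta(t)u_1$, observe that $v_t = \theta'(t)\nabla f/u_t$ solves the continuity equation via $-\lapl f = u_1 - u_0$, and conclude with Benamou--Brenier and Fubini. You have merely spelled out the continuity-equation verification and the regularity check for the flow-map assertion more explicitly than the paper's terse version.
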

\begin{proof}
  Let us define the convex combination $u_t\defeq (1-\theta(t))u_0+\theta(t)u_1$. 
  It is straightforward to check
  \begin{equation*}
    \frac{\de}{\de t}u_t(x) 
    + \div\left(u_t(x)\cdot\left(\frac{\theta'(t)}{u_t(x)}\nabla f(x)\right)\right) = 0 \comma
  \end{equation*}
  hence, thanks to the Benamou-Brenier formula, we have
  \begin{equation*}
    W_2^2(u_0\m, u_1\m)\le 
    \int_0^1 \int_M \frac{\theta'(t)^2}{u_t^2}\abs{\nabla f}^2 u_t\de\m\de t 
    = \int_M\abs{\nabla f}^2\int_0^1\frac{\theta'(t)^2}{u_t}\de t\de\m \fullstop
  \end{equation*}
\end{proof}

\begin{corollary}
  Given two smooth, positive density functions $u_0, u_1$ in $M$, let $f\in C^\infty(M)$ 
  be the unique solution of $-\lapl f = u_1-u_0$ with null mean.
  It holds
  \begin{equation}\label{eq:shallow_transport_ineq}
    W_2^2(u_0\m, u_1\m)\le 4 \int_M \frac{\abs{\nabla f}^2}{u_0}\de\m
  \end{equation}
  and also
  \begin{equation}\label{eq:sharp_transport_ineq}
    W_2^2(u_0\m, u_1\m)\le \int_M \abs{\nabla f}^2\frac{\log(u_1)-\log(u_0)}{u_1-u_0}\de\m \comma
  \end{equation}
  where the ratio is understood to be equal to $1$ if $u_1=u_0$.
\end{corollary}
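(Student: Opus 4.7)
The plan is a direct specialization of \cref{prop:transport_bound} to two carefully chosen profile functions $\theta$; the only substantive step is picking $\theta$ well, and no deep ideas are needed beyond the proposition itself.

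For the sharp inequality \cref{eq:sharp_transport_ineq} I would take the identity profile $\theta(t) = t$, so that $\theta'\equiv 1$ and $u_0(1-\theta(t)) + u_1\theta(t) = u_0 + (u_1-u_0)t$. The inner time integral then evaluates exactly to
\begin{equation*}
  \int_0^1 \frac{\de t}{u_0 + (u_1-u_0)t} = \frac{\log(u_1) - \log(u_0)}{u_1 - u_0},
\end{equation*}
with the natural limiting value $1/u_0$ when $u_0 = u_1$; plugging this into the bound of \cref{prop:transport_bound} gives exactly \cref{eq:sharp_transport_ineq}.

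For the shallow inequality \cref{eq:shallow_transport_ineq} I would use instead the quadratic profile $\theta(t) = 1 - (1-t)^2$, so that $\theta'(t) = 2(1-t)$ and $1-\theta(t) = (1-t)^2$. Discarding the nonnegative term $u_1 \theta(t) \ge 0$ from the denominator gives the pointwise bound
\begin{equation*}
  \frac{\theta'(t)^2}{u_0(1-\theta(t)) + u_1\theta(t)} \le \frac{4(1-t)^2}{u_0(1-t)^2} = \frac{4}{u_0},
\end{equation*}
and integrating in $t\in\cc01$ yields the constant $4/u_0$ outside the $M$-integral; \cref{prop:transport_bound} then delivers \cref{eq:shallow_transport_ineq}. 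Both choices of $\theta$ are admissible (strictly increasing, $C^1$, with $\theta(0)=0$ and $\theta(1)=1$), and the strict positivity of $u_0, u_1$ guarantees that the convex combination $u_t$ stays positive, so there is no genuine obstacle — the proof is really just an exercise in choosing the right interpolation curve.
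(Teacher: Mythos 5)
Your proposal is correct and matches the paper's proof essentially verbatim: the same two profiles $\theta(t)=t$ and $\theta(t)=1-(1-t)^2$ are substituted into \cref{prop:transport_bound}, the inner time integral is evaluated directly in the first case and bounded by dropping the nonnegative $u_1\theta(t)$ term in the second. The only nitpick is that $\theta(t)=1-(1-t)^2$ has $\theta'(1)=0$, so it is increasing but not \emph{strictly} increasing at the endpoint; this does not affect the argument.
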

\begin{proof}
  The inequality \cref{eq:shallow_transport_ineq} would follow from \cref{prop:transport_bound} 
  if we were able to find a function $\theta$ such that for any $x\in M$ it holds
  \begin{equation}\label{eq:transport_elementary_ineq}
    \int_0^1\frac{\theta'(t)^2}{u_0(x)(1-\theta(t))+u_1(x)\theta(t)}\de t \le \frac{4}{u_0(x)}\fullstop
  \end{equation}
  Let us start with the observation
  \begin{equation*}
    u_0(1-\theta(t))+u_1\theta(t) \ge u_0(1-\theta(t))
  \end{equation*}
  and therefore, in order to get \cref{eq:transport_elementary_ineq}, it suffices to have
  \begin{equation*}
    \int_0^1\frac{\theta'(t)^2}{1-\theta(t)}\de t \le 4
  \end{equation*}
  that is satisfied with equality by $\theta(t) = 1-(1-t)^2$.

  The inequality \cref{eq:sharp_transport_ineq} follows from \cref{prop:transport_bound} 
  choosing $\theta(t)=t$ and computing the definite integral.
\end{proof}
\begin{remark}
  The inequality \cref{eq:shallow_transport_ineq} will be used when we can control only one 
  of the two densities involved, whereas the sharper \cref{eq:sharp_transport_ineq} will be used 
  when we can show that both densities are already very close to $1$.
\end{remark}
\begin{remark}
    On a compact Riemannian manifold, it is well-known that the Wasserstein distance $W_2^2(u_0\m, u_1\m)$ is controlled by the relative entropy and by the Fisher information (see \cite{Otto-Villani00}). In our setting this kind of inequalities is not sufficient, as we need estimates that are \emph{almost-sharp} when $u_0, u_1\approx 1$. Moreover, both the relative entropy and the Fisher information depend on the pointwise value of the relative density $\frac{u_1}{u_0}$, but we need estimates that depend nonlocally on the density (as we must take into account the \emph{macroscopical} differences between $u_0$ and $u_1$). On the other hand, \cref{eq:shallow_transport_ineq,eq:sharp_transport_ineq} control the Wasserstein distance with the negative Sobolev norm $\norm{\frac{u_1}{u_0}-1}_{H^{-1}}$, that is nonlocal, and \cref{eq:sharp_transport_ineq} is \emph{almost-sharp} when $u_0,u_1\approx 1$.
\end{remark}

\section{Refined contractivity of the Heat Flow}\label{sec:extreme_contractivity}
The following proposition (see for example \cite[Theorem 3]{ErbarKuwadaSturm2015}) is the well-known H\"older continuity of the heat flow with respect 
to the Wasserstein  distance.
\begin{proposition}\label{prop:heat_holder}
  Given a measure $\mu\in \prob(M)$ and a positive $t>0$, it holds
  \begin{equation*}
    W_2^2(\mu, P_t^*(\mu)) \lesssim t \fullstop
  \end{equation*}
\end{proposition}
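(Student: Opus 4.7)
The plan is to exhibit an explicit transport plan between $\mu$ and $P_t^*(\mu)$ built from the heat kernel itself and then estimate its cost via the second moment of $p_t$. Concretely, I would define the coupling
\begin{equation*}
  \pi \defeq p_t(x,y)\,\de\mu(x)\,\de\m(y)
\end{equation*}
on $M\times M$. The first marginal is $\mu$ because $\int_M p_t(x,y)\de\m(y)=1$, and the second marginal is exactly $P_t^*(\mu)$ since its density against $\m$ is $y\mapsto\int_M p_t(x,y)\de\mu(x)$. This is the natural candidate: it transports mass from $x$ according to the diffusion profile starting at $x$.

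With this plan, one has
\begin{equation*}
  W_2^2(\mu,P_t^*\mu) \le \int_{M\times M} d^2(x,y)\de\pi(x,y)
  = \int_M\left(\int_M d^2(x,y)\,p_t(x,y)\de\m(y)\right)\de\mu(x) \comma
\end{equation*}
so the statement is reduced to the uniform second-moment bound
\begin{equation*}
  \sup_{x\in M}\int_M d^2(x,y)\,p_t(x,y)\de\m(y) \lesssim t \fullstop
\end{equation*}

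For $t\ge 1$ the estimate is immediate from compactness of $M$, so I would focus on $0<t<1$. Here I would invoke the Gaussian-type bound from \cref{thm:heat_estimate}, which gives $p_t(x,y)\lesssim t^{-1}a^{-d^2(x,y)/t}$. Combining this with the standard volume-growth estimate on geodesic spheres (as already used in the proof of \cref{cor:q_four}) and applying the coarea formula, the integral reduces to
\begin{equation*}
  \int_0^{\diam(M)} r^2\cdot\frac{1}{t}\,a^{-r^2/t}\cdot r\de r \comma
\end{equation*}
and the change of variables $w=r^2/t$ turns this into $t\cdot\int_0^{\infty} w\,a^{-w}\de w\lesssim t$, as desired.

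I do not anticipate a serious obstacle: the main point is simply choosing the right coupling and then reading off the bound from the Gaussian decay of the heat kernel. The only mildly delicate issue is the treatment of $x$ near the cut locus, but this is handled transparently by the pointwise estimate in \cref{thm:heat_estimate} together with the elementary coarea computation, which is insensitive to the behaviour of the distance function. Note also that one can bypass Benamou-Brenier entirely here, since a direct coupling argument is cleaner and yields the sharp rate $t$ without passing through the Fisher information along the flow.
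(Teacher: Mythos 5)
Your proof is correct, but it differs in kind from what the paper does: the paper does not prove \cref{prop:heat_holder} at all, it simply cites \cite[Theorem 3]{ErbarKuwadaSturm2015} for this well-known Wasserstein--H\"older regularity of the heat semigroup. What you give instead is a self-contained elementary argument via the canonical heat-kernel coupling $\pi = p_t(x,y)\de\mu(x)\de\m(y)$, reducing the claim to the uniform second-moment bound $\sup_x\int_M d^2(x,y)p_t(x,y)\de\m(y)\lesssim t$, which then follows from the Gaussian upper bound of \cref{thm:heat_estimate} by the same coarea computation the paper already uses in \cref{cor:q_four}. Both the coupling's marginals and the change of variables $w=r^2/t$ check out, and the case $t\ge 1$ is indeed trivial by compactness. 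One small point to be fully careful about (as the paper is in the proof of \cref{cor:q_four}): the estimate $\Haus^1(\partial B(x,r))\lesssim r$ is most cleanly justified after restricting to $B(x,\injradius(M)/2)$ and handling the complementary region separately, where $p_t$ is exponentially small in $1/t$; your argument as written elides this split, but it is cosmetic. In terms of what each route buys: citing Erbar--Kuwada--Sturm is shorter and works in great generality (e.g.\ RCD spaces via the EVI/gradient-flow formulation), while your coupling argument is more elementary, stays entirely within the toolbox already assembled in \cref{sec:probability}, and makes transparent why the sharp rate $t$ (rather than some weaker power) appears.
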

In this section we are going to prove a more refined (asymptotic) contractivity result, when $\mu$ is
an empirical measure built from an i.i.d. family with law $\m$.

The following theorem proves that the estimate described in 
\cref{prop:heat_holder} is far from being sharp when the measure $\mu$ is the empirical measure 
generated by $n$ random points. Indeed it shows that the average growth of the Wasserstein distance squared
is not linear after the threshold $t = \log\log(n)/n$.

Such a trend would be expected if the matching cost had magnitude $\bigo(\log\log(n)/n)$, but its magnitude is
$\bigo(\log(n)/n)$. This quirk shall be seen as a manifestation of the fact that, in dimension $2$, the
obstructions to the matching are both the \emph{global} and \emph{local} discrepancies in distribution 
between the empirical measure and the reference measure (see \cite[Section 4.2]{Talagrand14} for further
details on this intuition). Regularizing with the heat kernel we take into account only 
the short-scale discrepancies and thus we stop observing linear growth way before the real matching cost is
achieved.

Given that in higher dimension the main obstruction to the matching is concentrated in the 
microscopic scale, we don't think that a similar statement can hold in dimension $d>2$. 
With the wording ``similar statement'' we mean the fact that $\E{W_2^2(\mu^{n,t}, \m)}\ll t$ when
$t=\bigo(n^{-d/2})$ (let us recall that, if $d>2$, the expected matching
cost has order $\bigo(n^{-d/2})$).
\begin{theorem}\label{thm:near_stability_heat_kernel}
  Given a positive integer $n\in\N$, let $(X_i)_{1\le i\le n}$ be $n$ independent random points
  $\m$-uniformly distributed on $M$.
  Let $\mu^n = \frac1n\sum_{i=1}^n\delta_{X_i}$ be the empirical measure associated to the 
  points $(X_i)_{1\le i\le n}$. There exists a constant $C=C(M)>0$ such that, for any 
  time $t=\alpha/n$ with $\alpha\ge C\log(n)$, 
  denoting $\mu^{n,t}=P_t^*(\mu^n)$, it holds
  \begin{equation*}
    \E{W_2^2(\mu^n, \mu^{n,t})} \lesssim 
    \frac{\log(\alpha)}{n} \quad\left(\,\ll \frac{\alpha}{n}=t \,\right)\fullstop
  \end{equation*}
\end{theorem}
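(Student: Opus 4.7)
The heuristic is that the linear bound $W_2^2(\mu^n,\mu^{n,t})\lesssim t$ of \cref{prop:heat_holder} becomes wasteful once $\mu^{n,t}$ has smoothed out (roughly when $t\gtrsim \log(n)/n$), since the heat flow can afterwards only correct \emph{low-frequency} discrepancies in the density. The plan is to split the interval $[0,t]$ at the short time $\tau\defeq 1/n$: on $[0,\tau]$ we pay the trivial price $W_2^2(\mu^n,\mu^{n,\tau})\lesssim \tau\le \log(\alpha)/n$ via \cref{prop:heat_holder}, and on $[\tau,t]$ we use the transport cost inequality \cref{eq:shallow_transport_ineq} \emph{directly} on the pair $(\mu^{n,\tau},\mu^{n,t})$. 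It is essential to avoid any further splitting of $[\tau,t]$: each triangle inequality would, after squaring, cost a factor equal to the number of pieces, which is exactly the missing $\log(\alpha)$. The final answer will follow from $W_2^2(\mu^n,\mu^{n,t})\le 2 W_2^2(\mu^n,\mu^{n,\tau})+2 W_2^2(\mu^{n,\tau},\mu^{n,t})$.

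We apply \cref{eq:shallow_transport_ineq} in the asymmetric form with the smoother density $u^{n,t}$ in the denominator: since $-\lapl(f^{n,\tau}-f^{n,t})=u^{n,\tau}-u^{n,t}$,
\begin{equation*}
  W_2^2(\mu^{n,\tau},\mu^{n,t})\le 4\int_M \frac{\abs{\nabla(f^{n,\tau}-f^{n,t})}^2}{u^{n,t}}\de\m.
\end{equation*}
On the event $A^{n,t}_\xi$ of \cref{sec:probability}, for $\xi=\xi(M)$ a fixed small constant ensuring $\norm{u^{n,t}-1}_\infty\le 1/2$, the density $u^{n,t}$ is $\ge 1/2$ uniformly and the right-hand side is controlled by $8\int \abs{\nabla(f^{n,\tau}-f^{n,t})}^2\de\m$. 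On the complementary event we use the trivial bound $W_2^2\lesssim 1$ coming from the compactness of $M$; by \cref{thm:prob_flat_density}, choosing $C=C(M,\xi)$ large enough makes $\P{(A^{n,t}_\xi)^\complement}$ decay faster than any negative power of $n$ under the assumption $\alpha\ge C\log(n)$, so this contribution is negligible.

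The heart of the proof is the computation of
\begin{equation*}
  \E{\int_M \abs{\nabla(f^{n,\tau}-f^{n,t})}^2\de\m} = \frac{1}{n}\int_M\int_M \abs{\nabla_y R(x,y)}^2 \de\m(y)\de\m(x),
\end{equation*}
where $R(x,y)\defeq q_\tau(x,y)-q_t(x,y)=\int_\tau^t(p_s(x,y)-1)\de s$. The identity is a consequence of \cref{eq:f_as_average} and of the independence and zero-mean (in the $x$ variable) of $\nabla_y q_s(X_i,y)$. Integrating by parts against $-\lapl_y R=p_\tau-p_t$, then applying the semigroup identity $\int p_s(x,y)p_r(x,y)\de\m(y)=p_{s+r}(x,x)$ and the trace expansion of \cref{thm:heat_ondiag}, the double integral collapses to
\begin{equation*}
  \int_\tau^t\int_M \bigl(p_{s+\tau}(x,x)-p_{s+t}(x,x)\bigr)\de\m(x)\de s = \frac{1}{4\pi}\log\frac{(\tau+t)^2}{4\tau t}+\bigo(1),
\end{equation*}
which for $\tau=1/n$ and $t=\alpha/n$ is $\lesssim \log(\alpha)$.

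The main obstacle is precisely this last calculation: obtaining $\log(\alpha)$ and nothing worse out of the single pair $(\tau,t)$. Two features make it work. First, the asymmetric use of \cref{eq:shallow_transport_ineq} absorbs the bad pointwise behaviour of $u^{n,\tau}$ near the $X_i$'s into the (globally near-$1$) density $u^{n,t}$ alone, so that no control on $u^{n,\tau}$ is required. Second, the dimension-$2$ trace expansion $\int(p_r(x,x)-1)\de\m(x)\sim \tfrac{1}{4\pi r}$ is what turns the remaining time integral into a logarithm; this is also the reason why an analogous statement is not expected to hold when $d\ge 3$.
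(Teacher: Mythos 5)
Your proposal is correct and matches the paper's own proof in all essentials: the same split at $\tau=1/n$, the same triangle inequality and restriction to an event where $u^{n,t}\ge 1/2$, the same asymmetric use of \cref{eq:shallow_transport_ineq} with $u^{n,t}$ alone in the denominator, the same integration by parts and semigroup identity reducing the expectation to $\frac1n\int_\tau^t\int_M\bigl(p_{s+\tau}(x,x)-p_{s+t}(x,x)\bigr)\de\m(x)\de s$, and the same extraction of $\log\alpha$ from the dimension-$2$ trace expansion. The only (immaterial) differences are that the paper works with the fixed event $A^{n,t}_{1/2}$, handles the complementary event via the polynomial-growth observation of \cref{rem:easy_restriction_rule} rather than your slightly cleaner $W_2^2\lesssim\operatorname{diam}(M)^2$, and shortcuts the final integral via $p_{t+s}(x,x)\ge 1$ rather than estimating both terms as you do.
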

\begin{proof}
  Our approach consists of using \cref{prop:heat_holder} to estimate the distance between $\mu^n$ and 
  $\mu^{n,1/n}$ and then adopting \cref{prop:transport_bound} to estimate the distance 
  from $\mu^{n,1/n}$ to $\mu^{n,t}$.

  Let $u^{n,s}=\frac 1n\sum_1^np_s(X_i,\emptyparam)$ be the density of $P_s^*(\mu^n)$ and
  let us fix the time $t_0=\frac1n$. 

  Recalling \cref{prop:heat_holder}, it holds
  \begin{align*}
    \E{W_2^2(\mu^n, \mu^{n,t})}&\le 2\E{W_2^2(\mu^n, \mu^{n,t_0})} + 2\E{W_2^2(\mu^{n,t_0}, \mu^{n,t})} \\
    &\lesssim \frac1n + \E{W_2^2(\mu^{n,t_0}, \mu^{n,t})}
  \end{align*}
  In order to bound $\E{W_2^2(\mu^{n,t_0}, \mu^{n,t})}$, let us restrict our study to the event 
  $A^{n,t}_{\frac12}$.
  As stated in \cref{thm:prob_flat_density}, such an event is so likely (as a consequence of the
  assumption $\alpha\gg\log(n)$) that its complement can be completely ignored because all 
  quantities that we are estimating have polynomial growth in $n$ (recall 
  \cref{rem:complement_event_is_negligible}).

  Let us denote $f:M\to\R$ the null mean solution to the Poisson equation $-\lapl f = u^{n,t_0}-u^{n,t}$, 
  representable as $\int_{t_0}^t u^{n,s}-1\de s$.
  Recalling that we are in the event $A^{n,t}_{\frac12}$, we can apply \cref{eq:shallow_transport_ineq} 
  and obtain
  \begin{equation*}
    W_2^2(\mu^{n,t_0}, \mu^{n,t})\le 
    4\int_M \frac{\abs{\nabla f}^2}{u^{n,t}}\de\m \lesssim \int_M \abs{\nabla f}^2\de\m \fullstop
  \end{equation*}
  Using the independence of $p_a(X_i,y)$ and $p_b(X_j,y)$ for $a,b>0$, $y\in M$ and $i\neq j$, 
  we are now able to compute the expected value
  \begin{align*}
    \E{W_2^2(\mu^{n,t_0}, \mu^{n,t})} &\lesssim 
    \E{\int_M \abs{\nabla f}^2\de\m}  
    = \E{\int_M -\lapl f \cdot f\de\m} \\
    &= \E{\int_M (u^{n,t_0}-u^{n,t})\left(\int_{t_0}^t u^{n,s}-1\de s\right)\de\m} \\
    &=\frac1n\int_{t_0}^t\E{\int_M (p_{t_0}(X, y)-p_t(X,y))p_s(X,y)\de\m(y)} \de s \\
    &=\frac1n\int_{t_0}^t\int_M (p_{t_0+s}(x, x)-p_{t+s}(x,x))\de\m(x) \de s\\
    &\le \frac1n\int_{2t_0}^{t+t_0}\int_M (p_{s}(x, x)-1)\de\m(x) \de s
    \lesssim\frac1n\int_{2t_0}^{t+t_0}\frac1s \de s 
    \lesssim \frac{\log(\alpha)}{n}
  \end{align*}
  that is exactly the desired result. Let us remark that in one of the inequalities we have exploited
  the bound $p_r(x, x )\ge 1$ with $r=t+s$, a simple consequence of the Chapman-Kolmogorov property.
\end{proof}

\begin{remark}\label{rem:easy_restriction_rule}
  Before going on, let us take a minute to isolate and describe the approach we have employed to
  restrict our study to the event $A^{n,t}_{\frac12}$.

  Let $X, Y$ be random variables such that $X\equiv Y$ in an event $A$. It holds
  \begin{equation*}
    \abs*{\E{X}-\E{Y}}
    \le (\norm{X}_\infty+\norm{Y}_\infty)\P{A^\complement} \fullstop
  \end{equation*}
  Therefore, exactly as we did in the previous proof, if the event $A$ is \emph{much} smaller than the inverse
  of the magnitude of $X$ and $Y$, we can safely exchange $X$ and $Y$ when computing expected values.
\end{remark}
\begin{remark}
  In order to exploit \cref{rem:easy_restriction_rule} in the proofs of 
  \cref{thm:near_stability_heat_kernel,thm:main_theorem_semidiscrete,thm:main_theorem_bipartite}, 
  it is necessary to check that all involved quantities have at most polynomial growth in $n$ 
  (see \cref{rem:complement_event_is_negligible}).
  
  Let us prove, for example, that $\int_M \abs{\nabla f^{n,t}}^2\de\m$ has polynomial 
  growth in $n$ whenever $t=t(n)\ge \frac1n$. Thanks to standard elliptic estimates, it holds
  \begin{equation*}
    \norm{\nabla f^{n,t}}_2 \lesssim \norm{\lapl f^{n,t}}_2 = \norm{u^{n,t}-1}_2
    \le \sup_{x\in M}\norm{p_t(x,\emptyparam)}_2 \lesssim t^{-1}\comma
  \end{equation*}
  where in the last inequality we have applied \cref{thm:heat_estimate}. Thence, the desired 
  control over $\int_M \abs{\nabla f^{n,t}}^2\de\m$ follows from the condition on $t=t(n)$.
  
  For the proof of \cref{thm:main_theorem_semidiscrete} this turns out to be sufficient, whereas
  for the proofs of \cref{thm:near_stability_heat_kernel,thm:main_theorem_bipartite} some similar
  (but not identical) quantities should be controlled. 
  We do not write explicitly how to control them as the exact same reasoning works with minor
  changes.
\end{remark}

\section{Semi-discrete Matching}\label{sec:semidiscrete_matching}
This section is devoted to the computation, with an asymptotic estimate of the error term, of the average 
matching cost between the empirical measure generated by $n$ random points and the reference measure.

An estimate of the error term was recently provided by Ledoux in 
\cite[Eqs. (16) and (17)]{Ledoux18}.
Our estimate is slightly better than the one proposed by Ledoux; indeed he estimates the error as 
$\bigo(\log\log(n)\log(n)^{3/4}/n)$ whereas our estimate is $\bigo(\sqrt{\log\log(n)\log(n)}/n)$

Let us briefly sketch the strategy of the proof.
\begin{description}
  \item[Step 1] The inequality developed in the previous section allows us to choose $t$ of magnitude 
  $\bigo(\log^3(n)/n)$ while keeping $W^2_2(\mu^n,\mu^{n,t})$ under strict control. With such a choice
  of the regularization time, we can apply \cref{thm:prob_flat_density} and get that $A^{n,t}_\xi$ is a very likely event.
  Without \cref{thm:near_stability_heat_kernel} we would have been able only to choose 
  $t=\smallo(\log(n)/n)$ and that would have invalidated the proof.
  \item[Step 2] Using \cref{eq:sharp_transport_ineq} we estimate the matching cost between $\mu^{n,t}$ 
  and $\m$. It comes out that, in the event $A^{n,t}_\xi$, this matching cost is almost equal to
  \begin{equation*}
    \int_M \abs{\nabla f^{n,t}}^2\de\m
  \end{equation*}
  and we are able to evaluate it thanks to \cref{lem:int_nabla_f}.
\end{description}

The statement we are giving here is slightly stronger than the statement given in the introduction (as we
can now use the function $f^{n,t}$).
\begin{reptheorem}{thm:main_theorem_semidiscrete}
  Let $(M,\metric)$ be a $2$-dimensional compact closed manifold (or the square $\cc01^2$) whose volume 
  measure $\m$ is a probability. 
  Let $(X_i)_{i\in\N}$ be a family of independent random points $\m$-uniformly distributed on $M$.
  For a suitable choice of the constant $\gamma>0$, setting $t(n) = \gamma\frac{\log^3 n}{n}$, it holds
  \begin{equation*}
    \E{\abs*{W_2\left(\frac1n\sum_{i=1}^n\delta_{X_i}, \m\right) 
    - \sqrt{\int \abs{\nabla f^{n,t(n)}}^2\de\m}}} 
    \lesssim \sqrt{\frac{\log\log(n)}n} \fullstop
  \end{equation*}
  Furthermore it also holds
  \begin{equation*}
    \E{\abs*{W^2_2\left(\frac1n\sum_{i=1}^n\delta_{X_i}, \m\right) - \int \abs{\nabla f^{n,t(n)}}^2\de\m}}
    \lesssim \frac{\sqrt{\log(n)\log\log(n)}}n \comma
  \end{equation*}
  from which follows
  \begin{equation*}
    \lim_{n\to\infty} \frac{n}{\log(n)}\cdot\E{W^2_2\left(\frac1n\sum_{i=1}^n\delta_{X_i}, \m\right)}
    =\frac1{4\pi} \fullstop
  \end{equation*}
\end{reptheorem}
\begin{proof}
  Let us fix a parameter $\xi=\frac1{\log n}$ and the time variable 
  $t=\log(a)^{-1}\gamma\frac{\log^3 n}{n}$, so that \cref{rem:complement_event_is_negligible} gives
  $$
  \P{(A^{n,t}_{\xi})^{\complement}} \lesssim \frac{1}{n^{\gamma-5}}\fullstop
  $$
  Hence, by choosing $\gamma>0$ sufficiently large, we can obtain any power-like decay we need.
  
  Recalling \cref{thm:near_stability_heat_kernel}, we obtain
  \begin{equation}\label{eq:mu_n_to_mu_nt}
    \E{W^2_2(\mu^n,\mu^{n,t})} \lesssim \frac{\log\log(n)}n \fullstop
  \end{equation}

  Thanks to \cref{prop:transport_bound} we know that $\mu^{n,t}$ is the push-forward of $\m$ through the
  flow at time $1$ of the time-dependent vector field
  \begin{equation*}
    Y_s = \frac{\nabla f^{n,t}}{1 + s(u^{n,t}-1)} \fullstop
  \end{equation*}
  Thus, if we assume to be in the event $A^{n,t}_\xi$ with $n$ sufficiently large, 
  we can apply \cref{prop:stability_flow} with $X=\nabla f^{n,t}$ and 
  $Y_s=\frac{\nabla f^{n,t}}{1 + s(u^{n,t}-1)}$ to obtain 
  \begin{equation}\label{eq:flow_to_exp}
    W^2_2(\mu^{n,t}, \exp(\nabla f^{n,t})_\#\m)
    \lesssim
    \xi^2\int_M \abs{\nabla f^{n,t}}^2\de\m \fullstop
  \end{equation}
  Still working in the event $A^{n,t}_\xi$, thanks to \cite[Theorem 1.1]{Glaudo19}, we can say 
  (for a sufficiently large $n$) that
  \begin{equation}\label{eq:exp_to_formula}
    W^2_2(\exp(\nabla f^{n,t})_\#\m, \m) = \int_M \abs{\nabla f^{n,t}}^2 \de\m\fullstop
  \end{equation}

  Once again, as we have done in the proof of \cref{thm:near_stability_heat_kernel}, 
  let us notice that the restriction of our analysis to the event $A^{n,t}_\xi$ is not an issue. 
  Indeed its complement is so small that, 
  because all the quantities involved have no more than polynomial growth in $n$, using
  the approach described in \cref{rem:easy_restriction_rule}, we can restrict our study to the
  event $A^{n,t}_\xi$ thanks to \cref{thm:prob_flat_density}.

  Hence, joining \cref{eq:mu_n_to_mu_nt}, \cref{eq:flow_to_exp} and \cref{eq:exp_to_formula} 
  with the triangle inequality, we can get
  \begin{align*}
    \E{\abs*{W_2\left(\frac1n\sum_{i=1}^n\delta_{X_i}, \m\right) 
    - \sqrt{\int \abs{\nabla f^{n,t}}^2}}} &\\
    \lesssim 
    \sqrt{\frac{\log\log(n)}n} &
    +
    \xi\E{\int_M \abs{\nabla f^{n,t}}^2\de\m}^{\frac12}\fullstop
  \end{align*}
  Thus the first part of the statement follows from the choice $\xi=\frac{1}{\log n}$, 
  that gives, recalling \cref{lem:int_nabla_f}, that the leading term in the right hand side 
  is the first summand.

  The second part of the statement follows once again from \cref{eq:mu_n_to_mu_nt},
  \cref{eq:flow_to_exp} and \cref{eq:exp_to_formula}. 
  But instead of using the triangular inequality we use the elementary inequality
  \begin{equation*}
   \E{\abs{(D+C)^2-C^2}} \le 
   \E{D^2}^{1/2}\left(\E{D^2}^{1/2}+2\E{C^2}^{1/2}\right)
  \end{equation*}
  that holds for any choice of square integrable random variables $C, D$. 
  More in detail we apply the said inequality with $D=A+B$ and
  \begin{align*}
    A &= W_2(\mu^n, \m) - W_2(\mu^{n,t}, \m) \comma\\
    B &= W_2(\mu^{n,t}, \m) - \sqrt{\int_M\abs{\nabla f^{n,t}}^2\de\m} \comma\\
    C &= \sqrt{\int_M\abs{\nabla f^{n,t}}^2\de\m} \fullstop
  \end{align*}
  To estimate $\E{D^2}$ we proceed as follows
  \begin{equation*}
    \E{D^2} \lesssim \E{A^2} + \E{B^2} \le \E{W_2^2(\mu^n, \mu^{n,t})} + 
    \E{W_2^2(\mu^{n,t}, \exp(\nabla f^{n,t})_\#\m)} \comma
  \end{equation*}
  where we have applied \cref{eq:exp_to_formula}. Then \cref{eq:mu_n_to_mu_nt} and \cref{eq:flow_to_exp} provide the inequalities necessary to conclude.
\end{proof}

\begin{remark}
  In the work \cite{CaraccioloEtAl2014}, the authors claim that the higher order error term 
  should be $\bigo(\frac1n)$.
  Unfortunately with our approach it is impossible to improve the estimate on the error term 
  from $\sqrt{\log(n)\log\log(n)}/n$ to $1/n$. Indeed, even ignoring all the complex approximations and 
  estimates, our expansion involves the term $\frac{\abs{\log t}}{4\pi n}$. 
  Thence we would be obliged to set $t = \bigo\left(\frac 1n\right)$. 
  The issue is that this growth of $t$ does not allow us to exploit \cref{thm:prob_flat_density}. 
  Indeed, if $t = \bigo\left(\frac 1n\right)$, we are not able anymore to prove that 
  $A^{n,t}_\xi$ is a very likely event (even when $\xi$ is fixed) and our strategy fails.
\end{remark}

\section{Bipartite Matching}\label{sec:bipartite_matching}
Exactly as we have computed the expected cost for the semi-discrete matching problem, we are going to do
the same for the bipartite (or purely discrete) matching problem (i.e. we have to match two families 
of $n$ random points trying to minimize the sum of the distances squared). 

The approach is almost identical to the one described at the beginning of \cref{sec:semidiscrete_matching}.
Let us remark that this result is new for a general $2$-dimensional closed manifold $M$. 
Indeed in the work \cite{Ambrosio-Stra-Trevisan2018} the authors manage to handle the bipartite 
matching only when $M$ is the $2$-dimensional torus (or the square). 
Their approach is custom-tailored for the torus and thence very hard to generalize to other manifolds.

\begin{reptheorem}{thm:main_theorem_bipartite}[Main Theorem for bipartite matching]
  Let $(M,\metric)$ be a $2$-dimensional compact closed manifold (or the square $\cc01^2$) whose volume 
  measure $\m$ is a probability. 
  Let $(X_i)_{i\in\N}$ and $(Y_i)_{i\in\N}$ be two families of independent random points 
  $\m$-uniformly distributed on $M$.
  It holds the asymptotic behaviour
  \begin{equation*}
    \lim_{n\to\infty} \frac{n}{\log(n)}\cdot
    \E{W^2_2\left(\frac1n\sum_{i=1}^n\delta_{X_i}, \frac1n\sum_{i=1}^n\delta_{Y _i}\right)}
    =\frac1{2\pi} \fullstop
  \end{equation*}
\end{reptheorem}
\begin{proof}
  Let us warn the reader that in this proof the definitions of $f^{n,t}$ and of $A^{n,t}$ change. 
  The change is a natural consequence of the presence of two families of points. We decided to keep the same
  notation as the \emph{role and meaning} of the objects do not change at all.
  We will skip the parts of the proof identical to the proof of 
  \cref{thm:main_theorem_semidiscrete}.

  Let us fix a parameter $\xi=\frac1{\log n}$ and the time variable 
  $t=\log(a)^{-1}\gamma\frac{\log^3 n}{n}$ where $\gamma>0$ is a sufficiently large constant. 

  Analogously to what we have done in the semi-discrete case, let us define
  \begin{equation*}
    \mu_0^n \defeq \frac1n\sum_{i=1}^n \delta_{X_i} \comma \quad\quad
    \mu_1^n \defeq \frac1n\sum_{i=1}^n \delta_{Y_i}
  \end{equation*}
  and the associated regularized measures and densities
  \begin{equation*}
    \mu_0^{n,t} \defeq P_t^*\mu_0^n = u_0^{n,t}\m \comma \quad\quad
    \mu_1^{n,t} \defeq P_t^*\mu_1^n = u_1^{n,t}\m \fullstop
  \end{equation*}

  Let us denote with $f^{n,t}:M\to\R$ the unique function with null mean value such that
  $-\lapl f^{n,t} = u_1^{n,t}-u_0^{n,t}$. Of course it holds $f^{n,t} = f_1^{n,t}-f_0^{n,t}$ where the 
  functions $f_0^{n,t}$ and $f_1^{n,t}$ are defined exactly as in \cref{eq:definition_f} but using $\mu_0^{n,t}$ and $\mu_1^{n,t}$
  in place of $\mu$.

  Hence, we can apply \cref{thm:prob_flat_density} on $f_0^{n,t}$ and $f_1^{n,t}$ to obtain the estimate
  \begin{equation*}
    \P{(A^{n,t}_\xi)^{\complement}} \lesssim \frac1{\xi^2t^3}a^{-nt\xi^2} \fullstop
  \end{equation*}
  Here $A^{n,t}_\xi$ is defined as the intersection of the events $A^{n,t}_{\xi,\iota}$ for $\iota=0, 1$, where $A^{n,t}_{\xi,\iota}$ is
  \begin{equation*}
    A^{n,t}_{\xi,\iota} \defeq \{\norm{\nabla^2f_\iota^{n,t}}_\infty < \xi\} \fullstop
  \end{equation*}

  From now on the proof goes along the exact same lines of the proof of \cref{thm:main_theorem_semidiscrete} 
  (just replacing $\mu^n$ with $\mu_1^n$ and $\m$ with $\mu_0^n$) apart from the computation of
  \begin{equation*}
    \E{\int_M \abs{\nabla f^{n,t}}^2\de\mu_0^{n,t} } \fullstop
  \end{equation*}
  Indeed in the semi-discrete case we could blindly apply \cref{lem:int_nabla_f}, whereas now we have to
  compute it. 

  Thanks to \cref{thm:prob_flat_density} and \cref{rem:easy_restriction_rule}, we can assume 
  to be in the event $\{\norm{u^{n,t}-1}_\infty < \xi\}$ as it is so likely that its complement 
  can be safely ignored.
  Thus, if we replace $\mu_0^{n,t}$ with $\m$, we obtain
  \begin{equation}\label{eq:replace_mu}
    \E{\int_M \abs{\nabla f^{n,t}}^2\de\mu_0^{n,t} } 
    = (1+\bigo(\xi))\E{\int_M \abs{\nabla f^{n,t}}^2\de\m} \fullstop
  \end{equation}
  As already done in the proof of \cref{lem:int_nabla_f}, using the linearity of the expected value and the 
  independence of the random points we can easily compute
  \begin{equation*}
    \E{\int_M \abs{\nabla f^{n,t}}^2\de\m} 
    = \frac2n\int_M\int_M\abs{\nabla_y q_t}^2\de\m(y)\de\m(x)
  \end{equation*}
  and therefore, applying \cref{prop:on_diag_int}, we have shown
  \begin{equation*}
    \E{\int_M\abs{\nabla f^{n,t}}^2\de\m} 
    = \frac{\abs{\log(t)}}{2\pi n} + \bigo\left(\frac{1}{n}\right)
  \end{equation*}
  that, together with \cref{eq:replace_mu}, is exactly the result we needed to complete the proof.
\end{proof}

\begin{remark}[Interpolation between semi-discrete and bipartite]
  The proof given for the bipartite case is flexible enough to handle also families of random points with 
  different cardinalities. 
  Given a positive rational number $q\in\Q$ and a natural number $n\in\N$ such that $qn\in\N$, let
  $(X_i)_{1\le i\le n}$ and $(Y_i)_{1\le i\le qn}$ be two families of independent random points $\m$-uniformly
  distributed on $M$. Then, exactly as we have proven \cref{thm:main_theorem_bipartite}, we can show
  \begin{equation*}
    \E{W^2_2\left(\frac1n\sum_{i=1}^n\delta_{X_i}, \frac1{qn}\sum_{i=1}^{qn}\delta_{Y_i}\right)} 
    \sim \frac{\log(n)}{4\pi n}\left(1+\frac1q\right) \fullstop
  \end{equation*}
  Let us remark that when $q\gg 1$ we recover the result of the semi-discrete case.
\end{remark}

\appendix
\section{Stability of Vector Fields Flows}\label{app:stability_flow}
In this appendix we are going to obtain a stability result for flows of vector fields on a compact Riemannian
manifold.
This kind of results are well known, but we could not find a statement in literature that could fit exactly our
needs. The proof has a very classical flavor, borrowing the majority of the ideas from the 
uniqueness theory for ordinary differential equations.
Nonetheless it might seem a little technical as we are working on a Riemannian manifold and we are using 
both flows of
vector fields and the exponential map.

Given a compact closed Riemannian manifold $M$, we assume in this section that $X\in\chi(M)$ is a vector field such 
that $\norm{\nabla X}_\infty<\frac12$ (see \cref{def:tensor_infinity_norm}) and such that
$\norm{X}_\infty$ is sufficiently small with respect to $\injradius(M)$.

\begin{proposition}\label{prop:stability_flow}
  Under the previous assumptions on $X$,
  if $(Y_t)_{0\le t\le 1}$ is a time dependent vector field such that, for a 
  suitable $0<\xi<1$, it holds pointwise
  \begin{equation*}
    \abs{Y_t-X} < \xi \abs{X}\comma
  \end{equation*}
  then 
  \begin{equation*}
    d(\exp_p(X), F_1^{Y_t}(p)) \lesssim (\norm{\nabla X}_\infty + \xi)\abs{X}(p)
  \end{equation*}
  for any $p\in M$.
  In particular, for any $\mu\in\prob(M)$, it holds
  \begin{equation*}
    W_2^2(\exp(X)_\#\mu, (F_1^{Y_t})_\#\mu) 
    \lesssim (\norm{\nabla X}_\infty+\xi)^2\int_M \abs{X}^2\de\mu \fullstop
  \end{equation*}
\end{proposition}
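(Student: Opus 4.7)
The plan is to establish the pointwise bound $d(\exp_p(X(p)), F_1^{Y_t}(p)) \lesssim (\norm{\nabla X}_\infty + \xi)\abs{X}(p)$ for every $p\in M$, from which the Wasserstein estimate follows immediately: one couples $\exp(X)_\#\mu$ and $(F_1^{Y_t})_\#\mu$ via the identity transport on the source $\mu$, so that
\begin{equation*}
W_2^2(\exp(X)_\#\mu,(F_1^{Y_t})_\#\mu)\le\int_M d(\exp_p(X(p)),F_1^{Y_t}(p))^2\de\mu(p)\lesssim(\norm{\nabla X}_\infty+\xi)^2\int_M\abs{X}^2\de\mu.
\end{equation*}

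To prove the pointwise bound I would insert an intermediate object, namely the flow of the autonomous vector field $X$, and use the triangle inequality
\begin{equation*}
d(\exp_p(X(p)),F_1^{Y_t}(p))\le d(\exp_p(X(p)),F_1^X(p))+d(F_1^X(p),F_1^{Y_t}(p)).
\end{equation*}
A preliminary observation, used everywhere: along an integral curve $\sigma$ of $X$ one has $\tfrac{d}{dt}\abs{X(\sigma)}^2=2\langle X,\nabla_X X\rangle\le 2\norm{\nabla X}_\infty\abs{X(\sigma)}^2$, so by Gr\"onwall $\abs{X(\sigma(t))}$ stays within a uniform constant of $\abs{X(p)}$ for $t\in[0,1]$ (using $\norm{\nabla X}_\infty<1/2$). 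The same bound holds along integral curves of $Y_t$ since $\abs{Y_t}\le(1+\xi)\abs{X}\le 2\abs{X}$. The smallness of $\norm{X}_\infty$ compared with $\injradius(M)$ guarantees that all curves involved stay inside a single normal chart centered at $p$, which is where the comparison arguments will be carried out.

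For the first summand, both the geodesic $\phi(t)=\exp_p(tX(p))$ and the flow curve $\sigma(t)=F_t^X(p)$ start at $p$ with velocity $X(p)$; but $\phi$ has vanishing covariant acceleration while $\sigma$ satisfies $\nabla_{\sigma'}\sigma'=(\nabla X)\sigma'$, whose norm is $\lesssim\norm{\nabla X}_\infty\abs{X(p)}$ by the preliminary observation. In normal coordinates around $p$ the geodesic becomes the straight segment $tX(p)$, and a standard second-order ODE comparison (absorbing Christoffel and curvature terms, which are controlled by the compactness of $M$ and the smallness of $\abs{X(p)}$) yields $d(\phi(1),\sigma(1))\lesssim\norm{\nabla X}_\infty\abs{X(p)}$. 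For the second summand, consider $\delta(t)=d(\sigma(t),F_t^{Y_s}(p))$; working again in a normal chart to legitimize differentiation of $d$, one obtains $\delta'(t)\le\norm{\nabla X}_\infty\delta(t)+\abs{Y_t-X}(F_t^{Y_s}(p))\le\norm{\nabla X}_\infty\delta(t)+\xi\abs{X}(F_t^{Y_s}(p))\lesssim\norm{\nabla X}_\infty\delta(t)+\xi\abs{X(p)}$, and Gr\"onwall delivers $\delta(1)\lesssim\xi\abs{X(p)}$.

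The main obstacle is the technical Riemannian bookkeeping: the distance $d(\cdot,\cdot)$ is only piecewise smooth, so the two ODE-style comparisons must be made rigorous either by working systematically in normal coordinates (where one must carefully track the Christoffel and curvature corrections, all bounded uniformly on compact $M$) or by comparing through the squared distance on the region where minimizing geodesics are unique. The condition that $\norm{X}_\infty$ is small with respect to $\injradius(M)$ is crucial to confine all trajectories inside such a region and to ensure that all implicit constants depend only on $M$.
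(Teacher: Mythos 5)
Your decomposition is the same as the paper's: estimate $d(\exp_p(X),F_1^{Y_t}(p))$ through the intermediate point $F_1^X(p)$, then push the pointwise bound into $W_2$ via the trivial diagonal coupling. Your treatment of the second summand $d(F_1^X(p),F_1^{Y_t}(p))$ is also essentially the paper's: both differentiate the distance between the two flow lines and run a Gr\"onwall argument (the paper formalizes this as a standalone lemma about curves $\gamma$ satisfying $\abs{\gamma'-X\circ\gamma}\le\xi\abs{X}\circ\gamma$, using the first variation of arc length along a minimizing connecting geodesic and parallel transport to get a clean covariant inequality $\D'(t)\le 2\xi\abs{X}(p)+\D(t)$).

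Where you diverge is the first summand $d(\exp_p(X),F_1^X(p))$. You compare the geodesic and the flow line via a \emph{second-order} ODE estimate in normal coordinates, tracking Christoffel and curvature corrections — which you yourself flag as ``the main obstacle.'' The paper avoids this entirely with a trick: it proves (by differentiating $\abs{\gamma'(t)-X(\gamma(t))}$ covariantly along the geodesic $\gamma(t)=\exp_p(tX)$, where $\gamma''\equiv 0$) that the geodesic is itself an approximate integral curve of $X$ with relative error $2\norm{\nabla X}_\infty$, and then simply \emph{re-applies the same curve-stability lemma} with $\xi$ replaced by $2\norm{\nabla X}_\infty$. This keeps everything first-order and covariant, so no Christoffel symbols ever appear and no second-order Gr\"onwall is needed. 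Your coordinate argument can be made to work — the Christoffel contributions enter as a difference and contribute $O(\abs{X(p)}^2(\abs e+\abs{\dot e}))$, which is absorbable by Gr\"onwall since $\norm X_\infty$ is small — but the bookkeeping is genuinely heavier, and this is precisely the complication the paper's two-lemma structure is designed to sidestep. The net effect is that the paper uses one lemma twice while you run two separate comparison arguments.
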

\begin{proof}
  Let us begin with a technical lemma that proves the result for a single pathline of the flow 
  (denoted by $\gamma$) if, instead of the exponential map, the flow of $X$ is considered.
  \begin{lemma}\label{lem:stability_curve}
    Under the previous assumptions on $X$,
    if $\gamma:\cc01\to M$ is a $C^1$ curve such that $\abs{\gamma'-X\circ\gamma}\le\xi\abs{X}\circ\gamma$, then it holds
    \begin{equation*}
      d\left(F_1^X(\gamma(0)), \gamma(1)\right)\le 4\xi\abs{X}(\gamma(0)) \comma
    \end{equation*}
    where $F_t^X:M\to M$ is the flow induced by $X$ at time $t$.
  \end{lemma}
  \begin{proof}
    The approach we are going to carry on is standard and it involves finding a suitable 
    differential inequality for the left hand side that automatically implies the desired inequality.

    Let us define $p\defeq \gamma(0)$.
    \newcommand{\D}{\mathscr{D}}%
    As $X$ is sufficiently small, we can be sure that, at any time $0\le t\le 1$, $F_t^X(p)$ 
    and $\gamma(t)$ are very near. Therefore the function $\D:\cc01\to\R$ defined as 
    $\D(t)\defeq d(F_t^X(p), \gamma(t))$ is Lipschitz and differentiable where it does not vanish.

  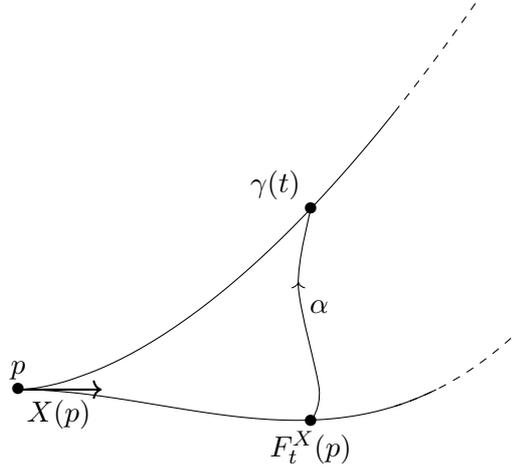
\begin{figure}[htb]
    \centering
    \begin{tikzpicture}[scale=5.5]
\coordinate (P) at (0,0);
\coordinate (A) at (0.7,-0.076);
\coordinate (B) at (0.7, 0.435);

\draw[domain=0:0.9,smooth,variable=\x] plot ({\x},{0.8*\x^1.7});
\draw[domain=0.9:1.1,smooth,variable=\x,dashed] plot ({\x},{0.8*\x^1.7});
\draw[domain=0:1,smooth,variable=\x] plot ({\x},{-0.5*\x^2 + 0.5*\x^3});
\draw[domain=0.9:1.2,smooth,variable=\x, dashed] plot ({\x},{-0.5*\x^2 + 0.5*\x^3});
\draw plot [smooth] coordinates {(A) (0.72, 0) (0.67, 0.25) (B)};%alpha
% ‎\draw ‎[‎smooth,‎samples=100‎,domain=0:1‎‎] ‎plot({\x},‎‎‎‎‎‎‎‎{\x*\x‎‎‎‎‎‎‎‎‎‎‎‎‎‎});‎‎

\node at (P) {\textbullet};
\node[above] at (P) {$p$};
\node at (A) {\textbullet};
\node[below] at (A) {$F^X_t(p)$};
\node at (B) {\textbullet};
\node[above left] at (B) {$\gamma(t)$};
\node at (0.72, 0.2) {$\alpha$};
\draw[->,thick] (0,0)--(0.2,0) node[below left]{$X(p)$};
\draw[->] (0.67, 0.25) -- (0.67, 0.26);

\end{tikzpicture}
    \caption{Curves involved in the proof of \cref{lem:stability_curve}.}
  \end{figure}
    Let us fix a time $0<t\le 1$ such that $\D(t)>0$ and let us denote with $\alpha:\cc{0}{\D(t)}\to M$ 
    a minimizing unit speed geodesic from $F_t^X(p)$ to $\gamma(t)$.
    Hence, considering the expression of the differential of the Riemannian distance, we have
    \begin{equation*}
      \frac{\de}{\de t}\D(t) 
      = \scalprod{\alpha'(\D(t))}{\gamma'(t)}-\scalprod{\alpha'(0)}{X(F_t^X(p))}\fullstop
    \end{equation*}
    Denoting $\Xpar:\alpha\to T\alpha$ the parallel transportation of $X$ from $\alpha(0)=F_t^X(p)$ to 
    $\alpha(\D(t))=\gamma(t)$, we can carry on our computations
    \begin{align*}
      &=\scalprod{\alpha'(\D(t))}{\gamma'(t)-\Xpar(\gamma(t))}
      \le \abs{\gamma'(t)-\Xpar(\gamma(t))}
      \le \abs{\gamma'(t)-X(\gamma(t))} + \abs{X(\gamma(t))-\Xpar(\gamma(t))}\\
      &\le \xi\abs{X}(\gamma(t))+\D(t)\norm{\nabla X}_{\infty}
      \le \xi\left(\abs{X}(F_t^X(p))+\D(t)\norm{\nabla X}_{\infty}\right) 
      +\D(t)\norm{\nabla X}_{\infty}\\
      &= \xi\abs{X}(F_t^X(p))+(1+\xi)\D(t)\norm{\nabla X}_{\infty}\\
      &\le \xi\left(\abs{X}(p) + d(p,F_t^X(p))\norm{\nabla X}_{\infty}\right)+
      (1+\xi)\D(t)\norm{\nabla X}_{\infty}\fullstop
    \end{align*}
    Our goal is now to estimate $d(p,F_t^X(p))$ with $\abs{X}(p)$. 
    That is a fairly easy task and can be achieved differentiating the said quantity. Indeed it holds
    \begin{equation*}
      \frac{\de}{\de t} d(p, F_t^X(p)) \le \abs{X(F_t^X(p))} 
      \le \abs{X(p)}+d(p, F_t^X(p))\norm{\nabla X}_\infty
    \end{equation*}
    and hence we can obtain the bound
    \begin{equation}\label{eq:easy_abs_bound}
      d(p,F_t^X(p))
      \le \abs{X}(p)\frac{e^{t\norm{\nabla X}_{\infty}}-1}{\norm{\nabla X}_{\infty}}\fullstop
    \end{equation}
    Replacing \cref{eq:easy_abs_bound} into our chain of inequalities we can finally get
    \begin{align*}
      \frac{\de}{\de t}\D(t)
      &\le \xi\abs{X}(p)e^{\norm{\nabla X}_{\infty}}
      +(1+\xi)\D(t)\norm{\nabla X}_{\infty} \\
      &\le 2\xi\abs{X}(p)+\D(t) \fullstop
    \end{align*}
    The statement follows easily integrating this last inequality.
  \end{proof}
  As anticipated, the main issue with the previous lemma is that instead of the exponential map, it uses
  the flow of $X$. We will use a trick that involves applying the lemma again to replace the
  flow with the exponential map. In order for our trick to work we need the following simple estimate.
 
  \begin{lemma}\label{lem:derivative_geodesic}
    Under the previous assumptions on $X$,
    for any $0\le t\le 1$ and for any $p\in M$, it holds
    \begin{equation*}
      \abs*{\frac{\de}{\de t}\exp_p(tX)-X(\exp_p(tX))}\le 
      2\norm{\nabla X}_{\infty}\cdot\abs{X(\exp_p(tX))} \fullstop
    \end{equation*}
  \end{lemma}
  \begin{proof}
    Let $\gamma(t)\defeq \exp_p(tX)$ be the unique geodesic with initial data $\gamma(0)=p$ and 
    $\gamma'(0)=X(p)$.
    Whenever $X(\gamma(t))\not=\gamma'(t)$, it holds
    \begin{equation*}
      \frac{\de}{\de t}\abs*{\gamma'(t)-X(\gamma(t))} = 
      \frac{\scalprod{\nabla_{\gamma'}X}{X(\gamma(t))-\gamma'(t)}}{\abs*{\gamma'(t)-X(\gamma(t))}}
    \end{equation*}
    and thus
    \begin{equation*}
      \abs*{\frac{\de}{\de t}\abs*{\gamma'(t)-X(\gamma(t))}} \le \norm{\nabla X}_\infty\abs{X(p)}
    \end{equation*}
    that, recalling the assumption $\norm{\nabla X}_\infty\le\frac12$, implies the statement.
  \end{proof}

  We can now prove the first part of the proposition.
  Let us fix a point $p\in M$. Thanks to \cref{lem:stability_curve} we know
  \begin{equation*}
    d(F_1^X(p), F_1^{Y_t}(p)) \le 4\xi\abs{X}(p) \fullstop
  \end{equation*}
  Furthermore, what we have shown in \cref{lem:derivative_geodesic} is exactly what we need to apply again
  \cref{lem:stability_curve} to the geodesic $\exp_p(tX)$, obtaining
  \begin{equation*}
    d(F_1^X(p), \exp_p(X)) \le 8\norm{\nabla X}_\infty\abs{X}(p) \fullstop
  \end{equation*}
  Joining this two inequalities we obtain the first part of the statement.

  The part of the statement about the Wasserstein distance between the push-forward measures follows trivially
  from what we have already proven, thanks to the following well-known lemma.

  \begin{lemma}\label{lem:near_maps_near_measures}
    Let $(X,\m)$ be a probability space and let $(Y, d)$ be a metric space. 
    Given two maps $f, g:X\to Y$ it holds
    \begin{equation*}
      W_p(f_\#\m, g_\#\m) \le \norm{d(f, g)}_{L^p(\m)}\fullstop
    \end{equation*}
  \end{lemma}
  \begin{proof}   
    Let us consider the map $T:X\to Y\times Y$ defined as $T(x)\defeq (f(x), g(x))$.
    The key observation is that $T_\#\m$ is a transport plan from $f_\#\m$ to $g_\#\m$,  therefore
    \begin{equation*}
      W_p^p(f_\#\m, g_\#\m)\le\int_{Y\times Y} d(y_1, y_2)^p\de T_\#\m (y_1,y_2)
      = \int_X d(f(x), g(x))^p\de\m(x) \fullstop
    \end{equation*}
  \end{proof}  
\end{proof}
\begin{remark}
  If the manifold $M$ has a smooth boundary, the previous proposition can be easily adapted 
  as far as the vector fields $X$ and $Y_t$ are both tangent to the boundary. 
  Indeed, under this assumption, if we extend arbitrarily the manifold $M$ to a closed compact 
  manifold\footnote{The extension is necessary to give a sense to the exponential map.} 
  $\tilde M$, the result keeps holding exactly as it is stated.
\end{remark}

\section{Heat Kernel on the Square} \label{app:square}
We will explicitly construct the Neumann heat kernel on the domain $\cc01^2$ in order to
show the validity of \cref{thm:heat_deriv} in this setting. The expression of the Neumann heat kernel
on the square is folklore, but for the ease of the reader we report it here.

Let $G<\Isom(\R^2)$ be the subgroup of isometries generated by the four transformations
\begin{itemize}
 \item $(x_1,x_2)\mapsto (x_1, x_2+2)$,
 \item $(x_1,x_2)\mapsto (x_1+2, x_2)$,
 \item $(x_1,x_2)\mapsto (-x_1, x_2)$,
 \item $(x_1,x_2)\mapsto (x_1, -x_2)$.
\end{itemize}
The group $G$ is generated by the reflections with respect to any horizontal or vertical line with integer
coordinates.
The square $\cc01^2$ is a fundamental cell for the action of the group $G$.
For any $x\in\R^2$, we will denote $Gx$ the orbit of the point $x$ under the action of $G$.
\begin{figure}[htb]
  \centering
  \begin{tikzpicture}[scale=1.5]

% Coordinates of x and Gx
\pgfmathsetmacro{\X}{0.1}
\pgfmathsetmacro{\Y}{0.7}

\coordinate (X) at (\X, \Y);
\coordinate (X1) at (2-\X, \Y);
\coordinate (X2) at (-\X, \Y);
\coordinate (X3) at (\X, 2-\Y);
\coordinate (X4) at (\X, -\Y);
\coordinate (X5) at (2-\X, 2-\Y);
\coordinate (X6) at (2-\X, -\Y);
\coordinate (X7) at (-\X, 2-\Y);
\coordinate (X8) at (-\X, -\Y);
\coordinate (X9) at (\X+2, 2-\Y);
\coordinate (X10) at (\X+2, \Y);
\coordinate (X11) at (\X+2, -\Y);
\coordinate (X12) at (2-\X, \Y-2);
\coordinate (X13) at (\X, \Y-2);
\coordinate (X14) at (-\X, \Y-2);
\coordinate (X15) at (\X+2, \Y-2);
\coordinate (X16) at (\X-2, 2-\Y);
\coordinate (X17) at (\X-2, \Y);
\coordinate (X18) at (\X-2, -\Y);
\coordinate (X19) at (\X-2, \Y-2);

% The description of Gx
\pgfmathsetmacro{\PX}{2}
\pgfmathsetmacro{\PY}{2.7}

\draw [fill=black] (\PX, \PY) circle [radius=1pt]; %node[above]{$x'$};
\draw [fill=black] (\PX + 0.13, \PY + 0.1) circle [radius=1pt]; %node[above]{$x'$};
\draw [fill=black] (\PX + 0.13, \PY - 0.05) circle [radius=1pt]; %node[above]{$x'$};
\node at (\PX-0.09, \PY +0.02) {$\{$};
\node at (\PX+0.58, \PY +0.02) {$\}=Gx$};

% Vertical axis
\draw[-] (0, -1.5)--(0, 2.5);
\draw[-] (1, -1.5)--(1, 2.5);
\draw[-] (-1, -1.5)--(-1, 2.5);
\draw[-] (2, -1.5)--(2, 2.5);
\draw[-, dashed] (-2, -1.5)--(-2, 2.5);
\draw[-, dashed] (3, -1.5)--(3, 2.5);
% Horizontal axis
\draw[-] (-2.3, 0)--(3.3, 0);
\draw[-] (-2.3, 1)--(3.3, 1);
\draw[-, dashed] (-2.3, -1)--(3.3, -1);
\draw[-, dashed] (-2.3, 2)--(3.3, 2);

% Drawing x
\fill [white!90!black] (0.01, 0.01) rectangle (1-0.01,1-0.01);

% Drawing Gx
\draw [fill=white, thick] (X) circle [radius=1pt] node[below right]{$x$};
\draw [fill=black] (X1) circle [radius=1pt]; %node[above]{$x'$};
\draw [fill=black] (X2) circle [radius=1pt]; %node[above]{$x'$};
\draw [fill=black] (X3) circle [radius=1pt]; %node[above]{$x'$};
\draw [fill=black] (X4) circle [radius=1pt]; %node[above]{$x'$};
\draw [fill=black] (X5) circle [radius=1pt]; %node[above]{$x'$};
\draw [fill=black] (X6) circle [radius=1pt]; %node[above]{$x'$};
\draw [fill=black] (X7) circle [radius=1pt]; %node[above]{$x'$};
\draw [fill=black] (X8) circle [radius=1pt]; 
\draw [fill=black] (X9) circle [radius=1pt]; 
\draw [fill=black] (X10) circle [radius=1pt]; 
\draw [fill=black] (X11) circle [radius=1pt]; 
\draw [fill=black] (X12) circle [radius=1pt]; 
\draw [fill=black] (X13) circle [radius=1pt]; 
\draw [fill=black] (X14) circle [radius=1pt]; 
\draw [fill=black] (X15) circle [radius=1pt]; 
\draw [fill=black] (X16) circle [radius=1pt]; 
\draw [fill=black] (X17) circle [radius=1pt]; 
\draw [fill=black] (X18) circle [radius=1pt]; 
\draw [fill=black] (X19) circle [radius=1pt]; 

\end{tikzpicture}
  \caption{Chosen $x\in\cc01^2$, the points in $Gx$ are generated by reflections.}
\end{figure}
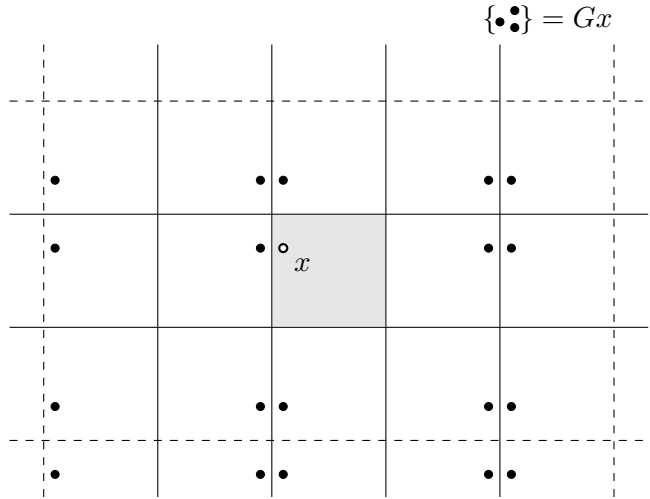

Let $p_t:\R^2\times\R^2\to\oo0\infty$ be the heat kernel on the plane, that is
\begin{equation*}
  p_t(x, y) = \frac1{4\pi t}e^{\frac{-\abs{x-y}^2}{4t}} \fullstop
\end{equation*}

Let us define the associated kernel $\tilde p_t:\R^2\times\R^2\to\oo0\infty$ as
\begin{equation*}
  \tilde p_t(x, y) \defeq \sum_{x'\in Gx} p_t(x', y) \fullstop
\end{equation*}

Our goal is showing that $\tilde p_t$ is exactly the (Neumann) heat kernel for the domain $\cc01^2$.
All the needed verifications are readily done, apart from the fact that $\tilde p_t$ 
satisfies the Neumann boundary conditions. 
This property follows from the following simple symmetries of $\tilde p_t$:
\begin{equation*}
  \forall x, y\in\R^2, \forall g\in G: \quad 
  \tilde p_t(x, y) = \tilde p_t(g(x), y) = \tilde p_t(x, g(y)) \fullstop
\end{equation*}

It is now time to prove \cref{thm:heat_deriv} for $\tilde p_t$.
\begin{proof}[Proof of \cref{thm:heat_deriv} for $M=\cc01^2$]
  First, we are going to prove explicitly \cref{thm:heat_deriv} for the heat kernel on the plane.
  
  As one can show by induction, it holds
  \begin{equation*}
      \partial^{n_1}_{y_1}\partial_{y_2}^{n_2} p_t(x,y) = p_t(x,y) \cdot
      \sum_{\substack{2\beta-\alpha=n_1+n_2 \\ 0\le \alpha,\beta\le n_1+n_2}}\sum_{m_1+m_2 = \alpha} c_{n_1,n_2}^{m_1,m_2}(x_1-y_1)^{m_1}(x_2-y_2)^{m_2}t^{-\beta} \comma
  \end{equation*}
  for some suitable coefficients $c_{n_1,n_2}^{m_1,m_2}$. From this formula, it follows
  \begin{gather}\begin{split}\label{eq:pt_computations}
    \abs{\nabla^N p_t(x, y)} 
    &\lesssim_N p_t(x, y)\sum_{\substack{2\beta-\alpha=N \\ 0\le \alpha,\beta\le N}} 
    \abs{x-y}^\alpha t^{-\beta} \\
    &\lesssim_N t^{-1-N/2}\sum_{0\le\alpha\le N} 
    \left(\frac{\abs{x-y}^2}{4t}\right)^{\alpha/2}e^{-\frac{\abs{x-y}^2}{4t}}\fullstop
  \end{split}\end{gather}
  
  For any $0<w\le u$ and 
  $0\le\alpha\le N$ it holds
  \begin{equation}\label{eq:elementary_ineq}
    u^{\alpha/2}e^{-u}\lesssim_N (1+w^{N/2})e^{-w} \fullstop
  \end{equation}
  If we apply the latter inequality (for the special case $u=w$) in \cref{eq:pt_computations}, we get
  \begin{equation}\label{eq:pt_deriv}
    \abs{\nabla^N p_t(x, y)} 
    \lesssim_N t^{-1-N/2}\left(1 
	    + \left(\frac{\abs{x-y}^2}{4t}\right)^\frac N2\right) e^{-\frac{\abs{x-y}^2}{4t}} \fullstop
  \end{equation}
 
  Let us move our attention to $\tilde p_t$.
  If we take $x,y\in\cc01^2$ and $t\le 1$, applying \cref{eq:pt_deriv} we are able to show
  \begin{equation*}
    \abs{\nabla^N \tilde p_t(x, y)} \le \sum_{x'\in Gx} \abs{\nabla^N p_t(x', y)}
      \lesssim_N  t^{-1-N/2}\sum_{x'\in Gx} \left(1+ \left(\frac{\abs{x'-y}^2}{4t}\right)^{\frac N2}\right) 
      e^{-\frac{\abs{x'-y}^2}{4t}} \fullstop
  \end{equation*}
  Using that $\abs{x-y}\le \abs{x'-y}$ for any $x'\in Gx$, recalling \cref{eq:elementary_ineq} and
  noticing the exponential decay of the quantities when $x'$ goes to infinity, from the latter inequality 
  we can deduce
  \begin{align*}
    \abs{\nabla^N \tilde p_t(x, y)} 
    &\lesssim_N  t^{-1-N/2}\left(1+ \left(\frac{\abs{x-y}^2}{4t}\right)^{\frac N2}\right) 
		e^{-\frac{\abs{x-y}^2}{4t}} \\
    &= t^{-1}e^{-\frac{\abs{x-y}^2}{4t}}
		\left(t^{-N/2}+\left(\frac{\abs{x-y}}{2t}\right)^N\right)
    \lesssim \tilde p_t(x, y)
	     \left(t^{-N/2}+\left(\frac{\abs{x-y}}t\right)^N\right)
  \end{align*}
  that is the desired result.
\end{proof}

\printbibliography

\end{document}